 \definecolor{BLACK}{gray}{0}
 \definecolor{WHITE}{gray}{1}
 \definecolor{RED}{rgb}{1,0,0}
 \definecolor{GREEN}{rgb}{0,1,0}
 \definecolor{BLUE}{rgb}{0,0,1}
 \definecolor{CYAN}{cmyk}{1,0,0,0}
 \definecolor{MAGENTA}{cmyk}{0,1,0,0}
 \definecolor{YELLOW}{cmyk}{0,0,1,0}
\newtheorem{theorem}{Theorem}[section]
\newtheorem{proposition}{Proposition}[section]
\newenvironment{proof}{{\noindent\it Proof} \quad}{\hfill $\square$ \par}
\newtheorem{lemma}{Lemma}[section]
\renewcommand{\Re}{\operatorname{Re}}
\renewcommand{\Im}{\operatorname{Im}}
\begin{document}

\title{The Eigenvalue Problem of Nonlinear Schr\"odinger Equation at Dirac Points of Honeycomb Lattice}

\author{Yejia Chen\footnote{sjtu-cyj@alumni.sjtu.edu.cn}}
\author{Ruihan Peng}
\author{Qidong Fu}
\author{Fangwei Ye}
\author{Weidong Luo\footnote{wdluo@sjtu.edu.cn}}

\affiliation{School of Physics and Astronomy, Shanghai Jiao Tong University, Shanghai 200240, China}
\date{\today}

\begin{abstract}
We give a rigorous deduction of the eigenvalue problem of the nonlinear Schr\"odinger equation (NLS) at Dirac Points for potential of honeycomb lattice symmetry. Based on a bootstrap method, we observe the bifurcation of the eigenfunctions into eight distinct modes from the two-dimensional degenerated eigenspace of the regressive linear Schr\"odinger equation. We give the existence, the way of construction, uniqueness in $H^2$ space and the $C^\infty$ continuity of these eigenfunctions.
\end{abstract}

\pacs{32.70.Jz, 42.50.-p, 42.50.Ct}
\maketitle

\section{Introduction}

This article focuses on the nonlinear phenomenon of Honeycomb lattice, which attracts intense interests around the physics and mathematics societies. As a frequently investigated two-dimensional models, honeycomb lattice has been widely researched in many fields of science. In condensed matter physics, the electronic structure of the graphene is one of the most famous applications \cite{Graphene-review:RevModPhys.81.109,honeycomb-exp-kekule-PhysRevLett}. In the quantum optics, there are also some important applications in several photonic honeycomb crystal \cite{honey-NLS:EXP-Omri,Honey-NLS:optical-edgewave-PRA,SSB:bifurcation-expnature-phon,Honey-NLS:Photonic-self-localized-PhysRevLett.111.243905}, reporting the unusual properties of the honeycomb lattice like the self-localization and periodical tunnelling patterns. Among all of these important findings, one of the significant properties of the honeycomb lattice are the special dispersion relation of the linear Hamiltonian model $H_{L}=-\delta+V_{hc}(\textbf{x})$. The origin of the dispersion relation comes from the solid physics when considering the energy bands of the electronic structure. As the Floquet-Bloch theorem states, the eigenvalue problems $H_{L}u(\textbf{x})=Eu(\textbf{x})$ can be decomposed into the subproblem $H_{L}(\textbf{k},\textbf{x})u(\textbf{k},\textbf{x})=E(\textbf{k})u(\textbf{k},\textbf{x})$. Associated with the so-called Bloch state $u(\textbf{k},\textbf{x})=v(\textbf{k},\textbf{x})e^{i\textbf{k}\cdot \textbf{x}}$, where $v(\textbf{x})$ is a periodic function of the honeycomb lattice. The evolution of the eigenvalue $E(\textbf{k})$ with the parameter $\textbf{k}$ in the Fourier space exactly represents the energy dispersion relation with the momentum of the electron for an single electron effective model. The uniqueness of honeycomb lattice lies in the corners of hexagonal the Brillouin zone, defined as the closure of the points $\textbf{k}$ closer to the origin points than any other points of the dual lattice $\mathbb{Z}^{2}$, the parameters of the discrete Fourier transformation of the torus constructed by identifying the points $\textbf{x}$ related through honeycomb periodicity. The conical shape of the dispersion surface near the corners, also called Dirac points by physicists, shading light on the unusual electromagnetic properties of materials like graphene and so on. At the Dirac points, the theoretical analysis \cite{Math-Honeycomb:Fefferman} exhibit the degeneration of the eigenspace for certain eigenvalue and its conical singularities. Especially, for weak potential, the ground states are exactly two-dimensionally degeneration, which is consistent with the observations in physical researches \cite{Graphene-review:RevModPhys.81.109}. This degeneration and the singularities are closely related to the finite symmetry of honeycomb lattice, resulting in the sensitivity of physical structure described by the honeycomb models towards external environment \cite{honeycomb-exp-kekule-PhysRevLett} or internal distortion \cite{honey-NLS:Zhuyi-Pra-distorted} and nonlinear effects \cite{SSB:bifurcation-expnature-phon}.

Nonlinearity constantly causes novel phenomena like solitons in physical and mathematical practices \cite{Math-Nonlinear-system-Yang}. For Bose-Einstein condensate (BEC), some researches investigated the self-trapping and symmetry breaking of the system in a double-well potential \cite{NLS:Smerzi-original-PhysRevLett,NLS:Raghavan-original-PhysRevA,NL:Coullet-non-hermitian-PRE,NL:Coullet-non-hermitian-Journalphys-B}. Unlike the linear cases, additional nonlinear terms significantly change the characteristics of the system just due to the varying of the norm of the target wavefunction, say $L^2$ norm for example. For the eigenvalue problem of nonlinear Hamiltonian $H_{NL}=-\Delta + V_{NL}(\textbf{x},\phi)$, the dependence of the potential on the wavefunction implies the failure of linear operator in theory coping with the eigenvalue problem. Rigorously speaking, the eigenspace is even not well-defined, since the solution $H_{NL}\phi(\textbf{x})=E\phi(\textbf{x})$ for fixed $E$ never forms a linear space, contradicting to the usual definition. It is still valuable to investigate such eigenvalue problem, though. In the context of the nonlinear Schr\"odinger equations (NLS), the justification of the existences of the eigenfunction indicates the stationary states of the system, no matter stable or unstable, showing the experimental feasibility of these models. Plus, these special states also offer as strong hints for the classification of the phase trajectories. Some stable states are attractors while some unstable states remark the critical points of the classification (See \cite{NLS:Smerzi-original-PhysRevLett,NLS:Raghavan-original-PhysRevA,NL:Coullet-non-hermitian-PRE,NL:Coullet-non-hermitian-Journalphys-B}). More importantly, as the varying of norm, some stationary states may emergent while some others disappear, which is called the bifurcation problem of the stationary states. Bifurcation only belongs to nonlinear dynamics, since the wavefunction never change by multiplication with a constant in a linear system. It repeated occurs in physical researches as phenomena of spontaneous symmetry breaking \cite{bifurcation-NL:Rahmi-asym2wells}, in stark differences with some examples of eigenspace modification by external symmetry-breaking effects \cite{honeycomb-exp-kekule-PhysRevLett}.

For strong interests of both the periodical structure and the nonlinear effects, surveys on the nonlinear Schr\"odinger equations in periodic potential are a natural extension of the previous researches. For instance, Bifurcation problem of Gross-Pitaevskii equation for periodic potential of any dimension was discussed in \cite{Math-Bifurcation:periodical-nonlinear-Dohnal}. The soliton and breather solutions of NLS in an array of Bose-Einstein condensates were reported in \cite{NLS:1dimarray-PhysRevLett}. Multiple solutions in $L^2(\mathbb{R}^2)$ were found in the periodic nonlinear systems under specific conditions \cite{Math-NLS:DING-periodical-multisolution,Math-NLS:Periodical-homoclinic}. For the widely usage and interesting function of honeycomb-type structure, there were also researches specifically focused on the equations in honeycomb lattice: the linear and nonlinear travelling of the edge states \cite{Honey-NLS:optical-edgewave-PRA,Math-NLS:Edge-States-Weinstein-Zhuyi}, dynamics of nonlinear waves in the deformed honeycomb lattice \cite{honey-NLS:Zhuyi-Pra-distorted,honey-NLS:Zhuyi2}, conical diffraction of the tight-binding lattice \cite{Honey-NLS:Zhuyi3}, and results by considering the approximate Dirac equation \cite{Math-NLS:William-localized-equations}. In particular, we point out the dynamics of the wave pocket compositions of the two-dimensional degenerated ground states dominated by NLS in honeycomb lattice are reported in \cite{Math-NLS:nonlinear-diracEQ-Jack}. In that paper, they also take use of the featured eigenfunctions of the Dirac points, which coincides with this research. However, we focus on periodical function of which the norms are defined in the torus, but not $\mathbb{R}^2$. For weak self-interactions and external potential, the assumption of Bloch periodicity is reasonable, and we give more detailed calculation and conclusions of the features of the eigenfunctions.

The goal of this article is to solve the eigenvalue problem of the nonlinear Schr\"odinger equation $H_{NL}\phi(\textbf{x})=E\phi(\textbf{x})$ for $H^1$ functions of small norm at Dirac points in the torus corresponding to weak, smooth honeycomb potential, where $H^s$ means the functions of the Sobolev space which have up to $s$ order weak derivative function in $L^2$ space ($s \in Z^{+}$). Our main result is Theorem \ref{theorem-main}, which states generally, the initially two-dimensional ground states in the linear Hamiltonian of honey lattice, or the eigenspace of the first eigenvalue in other words, bifurcate exactly into eight distinct curves of eigenstates in the nonlinear cases. It is remarkable result to exhibit such an bifurcation pattern, since it quite distinguish itself from many previous work reporting the nonlinear bifurcations. In those cases, the linear equations as the limit of nonlinear ones have one-dimensional ground state in general. When adding up the nonlinear term, the perturbed stationary states around the original ground state still remain one-dimensional. However, in this article, only the perturbation of some specific eigenfunctions in the degenerated two-dimensional eigenspace of the regressive linear Hamiltonian can result in the bifurcation of the nonlinear eigenstates. It looks like that the perturbation theory of degenerated eigenspace in physics monographs, but here we give a rigorous proof of the existence and the uniqueness of the eigenfunctions. It is still unknown if such bifurcation exists when in the strong potential or the norm of the testing function is large, though. In the derivation processes of the main results, the discrete symmetry of the system is found to have great influences in the determination of the allowed parameters that represent the states which eventually get involved in the bifurcation in the nonlinear cases. In honeycomb lattice, these symmetries are the reversion symmetry and the $C_3$ rotation symmetry. The results of this article may extend to any bifurcation of degenerated eigenspace, and we predict the discrete symmetry may play a similar role in the distribution of the allowed parameters.

This article is constructed as follows. In section \ref{sec-prop-linear}, we point out the basic knowledge of the linear Schr\"odinger equation in honeycomb lattice. Several properties of the primitive vectors, dual lattice, Dirac points and their eigenfunctions and eigenvalues are offered for reference. The result in the article \cite{Math-Honeycomb:Fefferman} is emphasized as the foundations for the following arguments. In section \ref{sec-intro-to-nonlinear}, we shortly restate the problem of the nonlinear Schr\"odinger equation. Some specific forms of the nonlinear terms and their applications are listed to show the potential of the researched model. In section \ref{sec-limitation-to-allowed-space}, we preliminarily showing the limitation of the potential candidates of the eigenfunctions. By the analytical methods, we orthogonally decompose the problem into the solvable system. The shallowing of the allowed parameter space implies the following discoveries of discrete bifurcation patterns. In section \ref{Sec-uni-and-radial-sepa}, we deduct two propositions concerning the uniqueness and the radial separability of the eigenfunctions. For the newly defined parameter space $\Sigma_q$, the propositions further investigate the topologies of the allowed subspace. In section \ref{sec-intro-to-nonlinear}, we obtain the main theorem \ref{theorem-main}, showing there are exactly eight bifurcation curves in general cases. In the deduction of the main theorem, major efforts are put into the construction of the eigenfunctions by a bootstrap method. To guarantee the procedure of bootstrapping, we also introduce the extended concept of pseudo eigenfunction as an intermediary step towards the true eigenfunctions. Six out of the eight modes are figured out by the careful consideration of the pseudo eigenfunctions.

\section{Properties of Linear Schr\"odinger Equation}\label{sec-prop-linear}
A brief review of the properties of the Schr\"odinger Equation with periodic potential is given in this section. By Bloch theorem and some symmetry arguments, we will see the degeneration of the eigenspace at K point in the momentum space and the distinctive shapes of the eigenfunctions due to the symmetry group of honeycomb lattice, which plays a crucial role in the deduction of eigenvalue problem of the nonlinear equation in the following sections.

Denote $\textbf{r}_1, \textbf{r}_2$ as the primitive vectors of the honeycomb lattice, i. e., the potential of the honeycomb lattice satisfy
\begin{eqnarray}
  V(\textbf{x}+\textbf{r}_1) &=& V(\textbf{x}), \\ \nonumber
  V(\textbf{x}+\textbf{r}_2) &=& V(\textbf{x}), \nonumber
\end{eqnarray}
where $\textbf{r} \in \mathbb{R}^2$ and
\begin{equation}
  \textbf{r}_1 = \left( \begin{array}{c}
                   \frac{\sqrt{3}}{2} \\
                   \frac{1}{2}
                 \end{array} \right), \, \textbf{r}_2 = \left( \begin{array}{c}
                   \frac{\sqrt{3}}{2} \\
                   -\frac{1}{2}
                 \end{array} \right) . \label{honeylattice}
\end{equation}
In the following passage, all the vectors would be of two dimension and written in bold form. According to the Bloch theorem, we know that the eigenspace of the periodic Hamiltonian can always be decomposed into smaller invariant space in respect with the momentum space. In other words, for any $\textbf{k} \in \mathbb{R}^2$, there is a solution $\psi(\textbf{r};
\textbf{k})$ satisfying the following equations:
\begin{equation}
  H \psi(\textbf{x};\textbf{k}) = E \psi(\textbf{x};\textbf{k}),
\end{equation}
\begin{equation}\label{psi-k}
\psi(\textbf{x}+\textbf{r};\textbf{k}) = e^{i\textbf{k} \cdot \textbf{r}} \psi(\textbf{x};\textbf{k}),
\end{equation}
where $\textbf{r}=N_1\textbf{r}_1+N_2\textbf{r}_2$ ($N_1$, $N_2\in \mathbb{Z}$) stands for any element of honeycomb lattice.
The Hamiltonian of a typical two-dimensional quantum system with periodic potential can be written as:
\begin{equation}\label{Hamiltonian_linear}
  H \equiv -\Delta + V(\textbf{x}),
\end{equation}
where $\Delta= \partial_{xx}+ \partial_{yy}$ is the Laplacian in $\mathbb{R}^2$. For each momentum $\textbf{k}$, we can also decompose the original eigenvalue problem into the following equivalent problems:
\begin{eqnarray}
  H(\textbf{k}) \phi(\textbf{x};\textbf{k}) &=& \mu(\textbf{k}) \phi(\textbf{x};\textbf{k}), \label{Hamiltonian-linear-k} \\
  H(\textbf{k}) &=& -(\nabla +i \textbf{k})^2+ V(\textbf{x}),\\
  \phi(\textbf{x}+\textbf{r};\textbf{k}) &=& \phi(\textbf{x};\textbf{k}).
\end{eqnarray}
Here $\phi(\textbf{k}) = \psi(\textbf{k})e^{-i \textbf{k} \cdot \textbf{x}}$. So the eigenfunctions are converted into periodic ones in terms of honeycomb translation symmetry. Moreover, given enough smoothness condition of the potential, all of these solutions consist of a complete basis in $L^2\left(\mathbb{R}^2/\{\textbf{r}\}\right)$, where $\mathbb{R}^2/\{\textbf{r}\}$ is the torus constructed by identifying the points whose difference is an element of honeycomb lattice. In the following we rewrite the lattice $\{\textbf{r}\}$ by $\Gamma$. For each periodic function $\phi(\textbf{x};\textbf{k})$, discrete Fourier transformation can be conducted to convert the function into the linear space of the dual lattice, the primitive basis of which are:
\begin{eqnarray}
  \textbf{k}_1 &=& \left(\begin{array}{c}
                           \frac{2\pi \sqrt{3}}{3}  \\
                           2\pi
                         \end{array} \right), \\
   \textbf{k}_2 &=& \left(\begin{array}{c}
                           \frac{2\pi \sqrt{3}}{3}  \\
                           -2\pi
                         \end{array} \right).
\end{eqnarray}
Therefore, all the allowed frequencies in the Fourier transformation of any $\phi(\textbf{x};\textbf{k})$ can be rewritten as $N_1\textbf{k}_1+N_2\textbf{k}_2$ ($N_1$, $N_2 \in \mathbb{Z}$).

For the honeycomb lattice, additional restrictions except for the translation symmetry are made in the periodic potential $V(\textbf{x})$ of Eq. (\ref{Hamiltonian_linear}). It is easy to enumerate all the sufficient and necessary conditions to build up a honeycomb lattice:

(1) V is periodic for any element of $\Gamma$.

(2) V has inversion symmetry, i.e. $V(-\textbf{x})=V(\textbf{x})$.

(3) V is invariant under clockwise rotation by $2\pi/3$, i.e. $V(R\textbf{x})=V(\textbf{x})$, where R is a 2 times 2 matrix:
\begin{equation}\label{rotation}
  R = \left( \begin{array}{cc}
               -\frac{1}{2} & -\frac{\sqrt{3}}{2} \\
               \frac{\sqrt{3}}{2} & -\frac{1}{2}
             \end{array}\right)
\end{equation}

The eigenvalue problem of Eq. (\ref{Hamiltonian-linear-k}) at certain specific parameters $\textbf{k}$ has some unusual properties, endowing materials like graphene with irreplaceable value in scientific researches and industry \cite{Graphene-review:RevModPhys.81.109}. One of the example is the corner of the Brillouin zone, or $\textbf{K}(\textbf{K'})$ point, the eigenvalue problem of which would be focused on later. Without loss of generality, we only show the properties of the reciprocal momentum space at K point, which is given by
\begin{equation}\label{Kpoint}
  \textbf{K} =\frac{4\pi}{3}\left( \begin{array}{c}
                        0 \\
                       1
                     \end{array} \right).
\end{equation}

One specificity of the K point is the rotational invariance of the function space $\{\psi(\textbf{x};\textbf{K})\}$. In detail, $\{\psi(R\textbf{x};\textbf{K})\}$ is also a eigenfunction of the Eq. (\ref{Hamiltonian_linear}) with parameter $\textbf{K}$. It implies that we can also define the rotation operator in the solution space $\{\phi(\textbf{x};\textbf{K})\}$ of Eq. (\ref{Hamiltonian-linear-k}).

To justify the above argument, take $f(\textbf{x})$ as a eigenfunction of Eq. (\ref{Hamiltonian_linear}) with parameter $\textbf{K}$. Due to the periodicity condition, f(\textbf{x}) has a representation of Fourier series:
\begin{equation}\label{Fourier}
  f(\textbf{x}) = \sum_{M_1, M_2 \in \mathbb{Z}} c(M_1,M_2) e^{i(\textbf{K}+ M_1\textbf{k}_1+M_2\textbf{k}_2) \cdot \textbf{x}}
\end{equation}
Rotating the coordinate by matrix $R$, we get:
\begin{eqnarray} \label{R-fourier}
    f(R\textbf{x})& = &\sum_{M_1, M_2 \in \mathbb{Z}} c(M_1,M_2) e^{i(\textbf{K}+ M_1\textbf{k}_1+M_2\textbf{k}_2) \cdot R\textbf{x}} \nonumber \\
   &=& \sum_{M_1, M_2 \in \mathbb{Z}} c(M_1,M_2) e^{i R^t(\textbf{K}+ M_1\textbf{k}_1+M_2\textbf{k}_2) \cdot \textbf{x}} \nonumber \\
   &=& \sum_{M_1, M_2 \in \mathbb{Z}} c(M_1,M_2) e^{i[\textbf{K}+ (-M_2)\textbf{k}_1+(M_1-M_2+1)\textbf{k}] \cdot \textbf{x}},
\end{eqnarray}
where $R^t$ is the transpose of the matrix $R$. It is an immediate result that the transformed function also has the form consistent with Eq. (\ref{psi-k}) as the original one. Furthermore, the commutativity of the Hamiltonian $H$ and the rotation operator $R$ implies that the transformed function is also an eigenfunction with the same eigenvalue.

It is also noticeable that we can now apply the rotation operator in the smaller subspace $\{\psi(\textbf{x};\textbf{K})\}$, or equivalently, $\{\phi(\textbf{x};\textbf{K})\}$, the eigenfunction space of the Hamiltonian with parameter $\textbf{K}$.

It is a proverbial fact by representation theory of finite group that the commutative group has only one-dimensional irreducible representations. As a result, every eigenfunction $\phi(\textbf{x};\textbf{K})$ of the effective Hamiltonian $H(\textbf{K})$ can be written as the sum of eigenfunctions which are also the eigenfunctions of the rotation operator. Since $R^3=I$, the eigenvalue of the rotation operator can only be $1, e^{2\pi i/3}, e^{-2\pi i/3}$. Denote $\omega=e^{2\pi i/3}$, and we further decompose the function space $\{\phi(\textbf{x};\textbf{K})\}$ into three smaller spaces:
\begin{eqnarray}\label{rot-space}
  L^2_{\textbf{K},1} &\equiv& \{\phi(\textbf{x};\textbf{K})|\widetilde{R} \phi(\textbf{x};\textbf{K}) = \phi(\textbf{x};\textbf{K}) \},  \\
  L^2_{\textbf{K},\omega} &\equiv& \{\phi(\textbf{x};\textbf{K})|\widetilde{R} \phi(\textbf{x};\textbf{K}) = \omega\phi(\textbf{x};\textbf{K}) \}, \\
  L^2_{\textbf{K},\overline{\omega}} &\equiv& \{\phi(\textbf{x};\textbf{K})|\widetilde{R} \phi(\textbf{x};\textbf{K}) = \overline{\omega}\phi(\textbf{x};\textbf{K}) \},
\end{eqnarray}
where $\overline{\omega}$ represents the conjugate of $\omega$ and $\widetilde{R}$ is a transformed rotation operator: $\widetilde{R}f(\textbf{x})= e^{-i\textbf{K}\cdot \textbf{x}}\left[e^{i\textbf{K} \cdot (\cdot)}f(\cdot)\right](R\textbf{x})$. An additional noteworthy remark is that if there exists an eigenfunction $f(\textbf{x}) \in L^2_{\textbf{K},\omega}$ of the Hamiltonian $H(\textbf{K})$, then $\overline{f(-\textbf{x})}$ lying in $L^2_{\textbf{K},\overline{\omega}}$ is also an eigenfunction of the same eigenvalue. Moreover, for $g(\textbf{x}) \in L^2_{\textbf{K},\omega}$, $h(\textbf{x}) \in L^2_{\textbf{K},\overline{\omega}}$ and $M(\textbf{x})$ a real function of honeycomb lattice symmetry, we have the following equality:
\begin{equation}\label{vanishing-honeycomb}
  I \equiv \int_{\mathbb{R}^2/\Gamma}M(\textbf{x}) \overline{g(\textbf{x})} h(\textbf{x}) d^2\textbf{x} = \int_{\mathbb{R}^2/\Gamma}M(\textbf{x}) g(\textbf{x}) \overline{h(\textbf{x})} d^2\textbf{x} = 0.
\end{equation}
It can be derived by applying the operator $\widetilde{R}$ to the integrand and by the invariance of the measure under the operator. This operation gives $(\overline{\omega}^2-1)I=0$ ($(\omega^2-1)I=0$) and then Eq. (\ref{vanishing-honeycomb}) follows. The rotational properties of the function space are quite useful and some deductions of similar identities are repeated constantly in the following passage.

There are plenties of researches focusing on the eigenvalue problem of the linear system $H(\textbf{K})$. To name a few, Fefferman and Weinstein have proved the following theorem, showing the degeneration of the eigenspace of the first eigenvalue for some specific weak honeycomb-like potential.
\begin{theorem}[Fefferman and Weinstein\cite{Math-Honeycomb:Fefferman}]
For $V(\textbf{x})$ a honeycomb lattice, suppose one of the Fourier coefficient of $V(\textbf{x})$ is nonzero:
\begin{equation}\label{1-1fourier}
  V_{1,1}=\int_{\mathbb{R}^2/\Gamma} e^{-(\textbf{k}_1+\textbf{k}_2)\cdot \textbf{x}} V(\textbf{x}) d^2\textbf{x} \neq 0.
\end{equation}
Then for sufficiently small $\epsilon$ such that $\epsilon V_{1,1}>0$, the eigenspace of the first eigenvalue of $H_{\textbf{K}}= -(\nabla + i\textbf{K})^2+\epsilon V(\textbf{x})$ is two-dimensional, with $\phi_0^a \in L^2_{\textbf{K},\omega}$ and $\phi_0^b \in L^2_{\textbf{K},\overline{\omega}}$ as the two linearly independent eigenfunctions. For proper choices, $\phi_0^a(\textbf{x})=\overline{\phi_0^b(-\textbf{x})}$.
\end{theorem}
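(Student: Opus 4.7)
The plan is to combine first-order degenerate perturbation theory around $\epsilon=0$ with the symmetry decomposition of the degenerate unperturbed eigenspace that has just been introduced, and then to appeal to Kato--Rellich analytic perturbation theory in order to upgrade the perturbative information into a statement about the actual ground eigenspace of $H_{\textbf{K}} = -(\nabla+i\textbf{K})^2 + \epsilon V(\textbf{x})$.

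First I would diagonalize the unperturbed operator $H_0(\textbf{K})=-(\nabla+i\textbf{K})^2$ on the torus $\mathbb{R}^2/\Gamma$ using the Fourier basis: its eigenfunctions are the plane waves $e^{i(M_1\textbf{k}_1+M_2\textbf{k}_2)\cdot\textbf{x}}$ with eigenvalues $|\textbf{K}+M_1\textbf{k}_1+M_2\textbf{k}_2|^2$. A direct computation with the explicit formulas for $\textbf{K}$, $\textbf{k}_1$, $\textbf{k}_2$ shows that the minimum $\mu_0=16\pi^2/9$ is attained at exactly three integer pairs $(M_1,M_2)$, which form a single orbit under the $C_3$--action on the dual lattice. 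Hence the ground eigenspace $V_0$ of $H_0(\textbf{K})$ is three--dimensional. Since $\widetilde{R}$ cyclically permutes the three plane-wave basis vectors, standard character theory for the cyclic group gives a canonical decomposition $V_0 = V_0^{(1)} \oplus V_0^{(\omega)} \oplus V_0^{(\overline{\omega})}$, one--dimensional in each rotation sector.

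Turning on the perturbation $\epsilon V$, I would apply first--order degenerate perturbation theory by restricting $V$ to $V_0$ and diagonalizing the resulting $3 \times 3$ matrix. By the vanishing identity (\ref{vanishing-honeycomb}) applied with $M(\textbf{x})=V(\textbf{x})$, every matrix element of $V$ between two distinct rotation sectors vanishes, so the perturbation is already diagonal in the symmetry basis; call the three first--order shifts $\delta\mu_{1},\delta\mu_{\omega},\delta\mu_{\overline{\omega}}$. Because $V$ is real and inversion--symmetric, the antilinear map $f(\textbf{x})\mapsto\overline{f(-\textbf{x})}$ commutes with $H(\textbf{K})$ and intertwines $V_0^{(\omega)}$ with $V_0^{(\overline{\omega})}$; this forces $\delta\mu_{\omega}=\delta\mu_{\overline{\omega}}$. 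A short Fourier computation then expresses the gap $\delta\mu_{1}-\delta\mu_{\omega}$ as a nonzero multiple of $\Re V_{1,1}$, so under the hypothesis $\epsilon V_{1,1}>0$ the common shift in the $\omega$ and $\overline{\omega}$ sectors lies strictly below the shift in $V_0^{(1)}$. At first order the ground eigenspace is therefore exactly two--dimensional and sits inside $L^2_{\textbf{K},\omega}\oplus L^2_{\textbf{K},\overline{\omega}}$.

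To go from first--order information to the theorem as stated, I would invoke Kato--Rellich analytic perturbation theory for the self--adjoint family $H(\textbf{K};\epsilon)$: each unperturbed eigenvalue of $H_0(\textbf{K})$ splits into finitely many real--analytic branches $\mu_j(\epsilon)$ whose leading behaviour is given by the shifts computed above. For $\epsilon$ sufficiently small and $\epsilon V_{1,1}>0$, continuity keeps the two branches emerging from $\delta\mu_{\omega}=\delta\mu_{\overline{\omega}}$ below the branch from $\delta\mu_{1}$ and below every branch coming from the higher unperturbed eigenvalues, producing a genuine two--dimensional first eigenspace with representatives $\phi_0^a\in L^2_{\textbf{K},\omega}$ and $\phi_0^b\in L^2_{\textbf{K},\overline{\omega}}$. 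The pairing $\phi_0^a(\textbf{x})=\overline{\phi_0^b(-\textbf{x})}$ then follows by normalizing $\phi_0^a$ and defining $\phi_0^b$ by the antilinear involution $f\mapsto\overline{f(-\textbf{x})}$, which preserves the ground eigenspace and swaps its two symmetry components. The main obstacle I anticipate is the combinatorial step of verifying that the unperturbed minimum is realized on a $C_3$--orbit of size exactly three (not on a larger orbit of dual--lattice points, which would change the splitting pattern); this is a finite lattice computation but must be carried out cleanly, after which the analytic perturbation argument is routine once the symmetry--selection rule from (\ref{vanishing-honeycomb}) is in hand.
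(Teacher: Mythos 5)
This statement is not proved in the paper at all: it is imported verbatim from Fefferman and Weinstein \cite{Math-Honeycomb:Fefferman} and used as a black box, so there is no internal proof to compare against. Your proposal is, however, essentially the weak-potential argument of the cited source, and it is sound. The key finite computation checks out: with $\textbf{K}=\tfrac{4\pi}{3}(0,1)^t$ the quantity $|\textbf{K}+M_1\textbf{k}_1+M_2\textbf{k}_2|^2=\tfrac{4\pi^2}{3}(M_1+M_2)^2+4\pi^2(\tfrac{2}{3}+M_1-M_2)^2$ attains its minimum $\tfrac{16\pi^2}{9}$ exactly at $(M_1,M_2)\in\{(0,0),(0,1),(-1,0)\}$, which is a single orbit of the map $(M_1,M_2)\mapsto(-M_2,M_1-M_2+1)$ appearing in Eq. (\ref{R-fourier}); the three nontrivial pairwise momentum differences are all related by rotation and inversion to $\textbf{k}_1+\textbf{k}_2$, so the off-diagonal matrix elements all equal $V_{1,1}$ (which is automatically real, since $V$ is real and even --- your ``$\Re V_{1,1}$'' is redundant), and the first-order shifts work out to $\epsilon(V_{0,0}+2V_{1,1})$ in the trivial sector versus $\epsilon(V_{0,0}-V_{1,1})$ in each of the $\omega$ and $\overline{\omega}$ sectors, giving the claimed ordering when $\epsilon V_{1,1}>0$.

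One step deserves tightening. First-order degeneracy of the $\omega$ and $\overline{\omega}$ branches plus ``continuity'' does not by itself give a genuinely two-dimensional first eigenspace --- continuity would be consistent with the two branches separating at order $\epsilon^2$. What closes this gap is that the degeneracy is exact at every $\epsilon$: $H(\textbf{K};\epsilon)$ commutes with $\widetilde{R}$, hence preserves each rotation sector, and the antilinear involution $f(\textbf{x})\mapsto\overline{f(-\textbf{x})}$ commutes with the full operator (not just with its first-order restriction) while exchanging $L^2_{\textbf{K},\omega}$ and $L^2_{\textbf{K},\overline{\omega}}$; therefore the spectra of the restrictions to the two sectors coincide identically, and simplicity of the lowest eigenvalue \emph{within} each sector (which does follow from Kato--Rellich, since the unperturbed lowest eigenvalue is simple within each sector with a positive gap) yields exactly a two-dimensional ground eigenspace with the stated pairing $\phi_0^a(\textbf{x})=\overline{\phi_0^b(-\textbf{x})}$. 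You have all the ingredients on the page; the argument should just be arranged so that the antilinear symmetry is applied to the exact operator rather than only to the first-order shifts.
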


The lowest degenerate two-dimensional linear space, the bifurcation of which with respect to the parameter $\textbf{k}$ in the neighborhood of $\textbf{K}$ is known as the famous Dirac cone, lies exactly in $L^2_{\textbf{K},\omega} \oplus L^2_{\textbf{K},\overline{\omega}}$. In condensed matter physics, this conical dispersion of the electronic spectral implies a type of dynamics dominated by the equation for the massless Dirac fermion \cite{Graphene-review:RevModPhys.81.109}. In the following sections, we would take advantage of these symmetrical properties to construct a bootstrap method in the analysis of eigenvalue problem of the nonlinear Schr\"odinger equation in the honeycomb lattice.

\section{Nonlinear Schr\"odinger Equation of Honeycomb Lattice}\label{sec-intro-to-nonlinear}
In this section, we would focus on the nonlinear schrodinger equation (NLS) in a honeycomb lattice, where the nonlinear Hamiltonian is:
\begin{equation}\label{NLS-original}
  H = -\Delta + V_{NL}(\textbf{x},|\psi(\textbf{x})|^2),
\end{equation}
where $V_{NL}$ is a $C^\infty$ function of $\psi$ and $\textbf{x}$ and $\psi(\textbf{x})$ is the function which the Hamiltonian is applied to. There are numerous examples of physical models that can be described by Eq. (\ref{NLS-original}). Here we present two kinds of the nonlinear effects that attract great interests in the scientific community:

(1) Kerr terms \cite{honey-NLS:EXP-Omri,Math-Nonlinear-system-Yang}, or $V_{NL} = V_L+ K|\psi|^2$, where $V_L$ is a honeycomb lattice potential independent of $\psi$. It is one of the most simplistic nonlinear term in the nonlinear dynamics. It is also called Gross-Pitaevskii equation (GPE) in the researches of Bose-Einstein condensates. In the language of quantum field theory, it is also regarded as the variation equation of the complex scalar Hamiltonian with $\phi^4$ term. It also consist of an integrable system if the linear term $V_{L}$ is trivial. This model is well suitable for the case when the strength of the wave is not strong, i. e. the $L^2$ norm is not large, say. The integrability also helps to give the analytical soliton solutions, for which why this model is so famous.

(2) Saturable nonlinear Schr\"odinger equation \cite{Honey-NLS:LinTai-Chia-Saturable-energy,Math-Nonlinear-system-Yang}, with the potential as $V_{NL}= K/(1+V_{L}+|\psi|^2)$. In this case, the nonlinear is globally bounded for $V_{L}>0$ for any points in $\mathbb{R}^2$. It is useful for strong external fields or testing fields, usually discussed in the context of the propagation of electromagnetic field in quantum optics. We will see the differences of the saturable nonlinear term with the Kerr terms in the determination of the eigenfunctions among the pseudo ones.

Although the Bloch theorem fails to decompose the nonlinear system into subproblems with specified momentum $\textbf{k}$, we can also consider the eigenvalue problem within the function space of certain periodicity condition as an approximation for the limit of wavepacket with short width in the momentum space. Specifically, we can define the similar momentum-dependent Hamiltonian:
\begin{equation}\label{NLS-k}
  H(\textbf{k})= -(\nabla +i \textbf{k})^2 + V_{NL}.
\end{equation}
For $\textbf{k}$ lies in the bulk of the Brillouin zone, plenties of researches showed usually all the eigenvalues of the linear Hamiltonian $H(\textbf{k})$ in Eq. (\ref{Hamiltonian-linear-k}) are of multiplicity one \cite{Graphene-review:RevModPhys.81.109}. For the corresponding models of Eq. (\ref{NLS-k}), it is actually equivalent to a double-well model in the tight-binding approximation \cite{NLS:Smerzi-original-PhysRevLett}, with the same bifurcation pattern of the ground state energy, the physical jargon of the first eigenvalue, as the varying of the norm of the eigenfunctions as reported. The equivalence of these two models will be discussed in the forthcoming article of the author. Apart from the bulk cases, things are changed in the Dirac points due to the degeneration of the eigenspace. In the following passage, we are going to show the quite special bifurcation of the eigenfunctions.

\section{The restriction of parameter space in the eigenvalue problem}\label{sec-limitation-to-allowed-space}
For the sake of construction of eigenfunctions, we consider the approximate linear differential equations, of which the Hamiltonian is:
\begin{equation}\label{LS-series}
  H_t= -\Delta + V_{NL}(\textbf{x},|\psi_t(\textbf{x})|^2),
\end{equation}
where $\psi_t$ acted as a test function would be given in different situations. Since now the function inside the nonlinear term is fixed, the Hamiltonian retains its classical definition with linearity. Subsequently, we also define the corresponding Hamiltonian with momentum $\textbf{k}$:
\begin{eqnarray}
H_t(\textbf{k}) &=& -(\nabla +i \textbf{k})^2 + \widetilde{V}_{NL} (\textbf{x},| \phi_t(\textbf{x}) |^2 ), \nonumber \\
   &\equiv& -\Delta_{\textbf{k}} + V_{NL}, \label{LS-series-k}
\end{eqnarray}
if $\psi_t$ is also a Bloch function of the honeycomb lattice.

Recall that the ground states consist of a two-dimension linear subspace of the whole Hilbert space $L^2(\mathbb{R}^2/\Gamma)$. Denote the basis of the subspace as $\psi_0^a$ and $\psi_0^b$, which are of norm one lying in the subspace $L^2_{\textbf{K},\omega}$ and $L^2_{\textbf{K},\overline{\omega}}$ defined in section \ref{sec-prop-linear} respectively. Remark that $\psi_0^a(\textbf{x})=\overline{\psi_0^b(-\textbf{x})}$ as mentioned in section \ref{sec-prop-linear}. To begin the bootstrap argument, we set $\psi_t=a\psi_0^a+b\psi_0^b$. Below we show that by a perturbation method, the eigenfunction of Eq. (\ref{LS-series-k}) for $\psi_t$ can only exist in a restricted parameter area for $(a,b)$. Indeed, we have an even more general proposition for many $\psi_t$:

\begin{proposition}\label{Prop-1LS}
For sufficiently small $\epsilon>0$, suppose $V_{NL}(\textbf{x},|\phi_t(\textbf{x})|^2)$ in Eq. (\ref{LS-series-k}) is a $C^\infty(\textbf{x},|\phi_t|^2)$ function expanded as $V_{NL}=V_{L}(\textbf{x})+v(\textbf{x},|\phi_t(\textbf{x})|^2)$, where $v = K(\textbf{x})|\phi(\textbf{x})|^2+ O(|\phi(\textbf{x})|^4)$  and $K(\textbf{x})$ is a nonzero function of honeycomb symmetry. Additionally, $\phi_t \in H^2(\mathbb{R}^2/\Gamma)$ is chosen to satisfy
\begin{equation}\label{P1Q-LS-NLterm-condition1}
  1- \frac{\left|\langle \phi_t, \phi_0^a \rangle\right|^2+\left|\langle \phi_t, \phi_0^b \rangle\right|^2}{\langle \phi_t, \phi_t \rangle} < \epsilon^4
\end{equation}
and $\|\phi_t(\textbf{x})\|_{L^2(\mathbb{R}^2/\Gamma)}<\epsilon$.$ K(\textbf{x})$ satisfies
\begin{eqnarray}
  \nonumber && \int_{\mathbb{R}^2/\Gamma} K(\textbf{x}) \left[|\phi_0^a(\textbf{x})|^4-2|\phi_0^b(\textbf{x})|^2 |\phi_0^a(\textbf{x})|^2 \right] d^2\textbf{x}  \\
   \label{P1Q-LS-NLterm-condition2}
 &=& \int_{\mathbb{R}^2/\Gamma} K(\textbf{x}) \left[|\phi_0^b(\textbf{x})|^4-2|\phi_0^b(\textbf{x})|^2 |\phi_0^a(\textbf{x})|^2 \right] d^2\textbf{x} \neq 0.
\end{eqnarray}
Then there exists $\delta>0$ so that the necessary condition that there exists an eigenvalue function $\phi(\textbf{x})$ in $H^2(\mathbb{R}^2/\Gamma)$ of Eq. (\ref{LS-series-k}), such that $\langle \phi(\textbf{x})-\phi_t(\textbf{x}), \phi_0^a(\textbf{x}) \rangle=\langle \phi(\textbf{x})-\phi_t(\textbf{x}), \phi_0^b(\textbf{x}) \rangle=0$ and $(H_t(\textbf{k})-E_0)\|\phi(\textbf{x})\|_{L^2(\mathbb{R}^2/\Gamma)}<\epsilon \|\phi(\textbf{x})\|_{L^2(\mathbb{R}^2/\Gamma)}$, is that either
\begin{equation}\label{P1Q-condition-1}
  \frac{\langle\phi_0^a,\phi\rangle}{\langle\phi,\phi\rangle}<\delta  \quad or \quad \frac{\langle\phi_0^b,\phi\rangle}{\langle\phi,\phi\rangle}<\delta,
\end{equation}
or
\begin{equation}\label{P1Q-condition-2}
  \left| |\langle \phi_0^a,\phi\rangle|^2 - |\langle \phi_0^b,\phi \rangle|^2 \right| < \delta.
\end{equation}
Here $\langle \cdot, \cdot \rangle$ means the inner product in the space $L^2(\mathbb{R}^2/\Gamma)$ and $E_0$ is the first eigenvalue of the function-independent $H(\textbf{K})$, the linear Hamiltonian Eq. (\ref{LS-series-k}) with $\phi_t=0$.
\end{proposition}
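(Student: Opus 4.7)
My plan is to project the approximate eigenvalue condition onto the two-dimensional degenerate subspace $\mathrm{span}\{\phi_0^a,\phi_0^b\}$ and reduce the problem to a pair of cubic equations in the coefficients $\alpha=\langle\phi_0^a,\phi\rangle$ and $\beta=\langle\phi_0^b,\phi\rangle$; the $C_3$ rotational selection rules (\ref{vanishing-honeycomb}) collapse the cubic expansion to just two terms per projection, after which the trichotomy follows algebraically.

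I would first decompose $\phi=\alpha\phi_0^a+\beta\phi_0^b+\phi_\perp$ and $\phi_t=\alpha\phi_0^a+\beta\phi_0^b+\phi_{t,\perp}$; the two projections coincide by the orthogonality hypothesis $\langle\phi-\phi_t,\phi_0^{a,b}\rangle=0$. Hypothesis~(\ref{P1Q-LS-NLterm-condition1}) together with $\|\phi_t\|_{L^2}<\epsilon$ gives $\|\phi_{t,\perp}\|_{L^2}=O(\epsilon^3)$, while projecting the approximate eigenvalue equation onto the orthogonal complement of $\mathrm{span}\{\phi_0^a,\phi_0^b\}$ and using the positive spectral gap of $H(\textbf{K})-E_0$ there (together with the $L^\infty$ control on $v$ from $H^2\hookrightarrow L^\infty$ on the torus) bounds $\|\phi_\perp\|_{L^2}=O(\epsilon\|\phi\|)$.

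Taking the inner product of $(H_t(\textbf{k})-E_0)\phi$ with $\phi_0^a$ and with $\phi_0^b$, self-adjointness of $H(\textbf{K})$ together with $H(\textbf{K})\phi_0^{a,b}=E_0\phi_0^{a,b}$ removes the kinetic contribution, leaving $\langle\phi_0^{a,b},v\phi\rangle$ bounded by the residual $\epsilon\|\phi\|$. Expanding $v=K|\phi_t|^2+O(|\phi_t|^4)$ and replacing $\phi,\phi_t$ by their leading two-dimensional projections produces the cubic $K|\alpha\phi_0^a+\beta\phi_0^b|^2(\alpha\phi_0^a+\beta\phi_0^b)$, which contains eight monomials in $\phi_0^a,\phi_0^b,\overline{\phi_0^a},\overline{\phi_0^b}$; assigning rotation charge $+1$ to $\phi_0^a$ and $-1$ to $\phi_0^b$, the selection rule~(\ref{vanishing-honeycomb}) eliminates every term whose total rotation charge, including the $\overline{\phi_0^{a,b}}$ outside the integral, is nonzero modulo three. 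The three surviving monomials in each projection give
\begin{equation*}
\alpha\bigl(|\alpha|^2 I_a+2|\beta|^2 I_{ab}\bigr)=O(\text{h.o.t.}),\qquad \beta\bigl(|\beta|^2 I_a+2|\alpha|^2 I_{ab}\bigr)=O(\text{h.o.t.}),
\end{equation*}
with $I_a=\int_{\mathbb{R}^2/\Gamma} K|\phi_0^a|^4\,d^2\textbf{x}$ and $I_{ab}=\int_{\mathbb{R}^2/\Gamma} K|\phi_0^a|^2|\phi_0^b|^2\,d^2\textbf{x}$, the identity $I_a=I_b$ following from $\phi_0^a(\textbf{x})=\overline{\phi_0^b(-\textbf{x})}$ combined with $K(-\textbf{x})=K(\textbf{x})$.

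The trichotomy is now algebraic: if both $|\alpha|/\|\phi\|\ge\delta$ and $|\beta|/\|\phi\|\ge\delta$ (so (\ref{P1Q-condition-1}) fails), dividing the two equations forces each parenthetical factor to vanish up to h.o.t., and subtracting yields $(|\alpha|^2-|\beta|^2)(I_a-2I_{ab})=O(\text{h.o.t.})$; hypothesis~(\ref{P1Q-LS-NLterm-condition2}) guarantees $I_a-2I_{ab}\neq 0$, which produces~(\ref{P1Q-condition-2}) once $\delta$ is chosen as an appropriate power of $\epsilon$. The principal technical obstacle is the bookkeeping of the higher-order corrections---the $O(|\phi_t|^4)$ tail of $v$, the errors from replacing $\phi$ and $\phi_t$ by their two-dimensional projections, and the residual from $\|(H_t(\textbf{k})-E_0)\phi\|<\epsilon\|\phi\|$---which must be shown to be genuinely subleading compared to the cubic skeleton, and which pins down the admissible $\epsilon$-dependence of $\delta$.
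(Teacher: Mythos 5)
Your strategy is essentially the paper's own: a Lyapunov--Schmidt-type split of $\phi$ into its component in $\mathrm{span}\{\phi_0^a,\phi_0^b\}$ plus an orthogonal remainder controlled by the spectral gap, projection of the eigenvalue equation onto $\phi_0^a$ and $\phi_0^b$, the $C_3$ selection rule (\ref{vanishing-honeycomb}) to collapse the cubic to the two invariants $I_a$ and $I_{ab}$ (the paper's $I_{one}$ and $I_{int}$), the identity $I_a=I_b$ from inversion, and hypothesis (\ref{P1Q-LS-NLterm-condition2}) to close the trichotomy. The charge counting, the $O(\epsilon^3)$ bound on $\phi_{t,\perp}$, and the surviving monomials all agree with the paper's computation of $E_a$, $E_b$, $E_{int}$.

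The gap is in your treatment of the eigenvalue shift. Writing $E_1=E-E_0$, the two projections are exactly $\alpha\left(|\alpha|^2I_a+2|\beta|^2I_{ab}\right)+O(\epsilon^4)=E_1\alpha$ and $\beta\left(|\beta|^2I_a+2|\alpha|^2I_{ab}\right)+O(\epsilon^4)=E_1\beta$, i.e. the paper's Eqs.~(\ref{P1-rev-condition-1})--(\ref{P1-rev-condition-2}). The hypothesis $\|(H_t-E_0)\phi\|<\epsilon\|\phi\|$ only yields $|E_1|<\epsilon$, hence $|E_1\alpha|\lesssim\epsilon^2$, which \emph{dominates} the cubic skeleton $\alpha|\alpha|^2I_a\sim\epsilon^3$; so your displayed equations with right-hand side ``$O(\text{h.o.t.})$'' are not derivable from the stated bounds, and the intermediate claim that ``each parenthetical factor vanishes up to h.o.t.''\ is false: after dividing by $\alpha$ (resp.\ $\beta$) each factor equals $E_1$ up to genuinely higher order, and $E_1\sim\epsilon^2(I_a/2+I_{ab})$ is generically nonzero and of exactly the same order $\epsilon^2$ as the factors themselves. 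Your final identity $(|\alpha|^2-|\beta|^2)(I_a-2I_{ab})=O(\text{h.o.t.})$ is nevertheless correct, but only because the unknown $E_1$ cancels \emph{identically} in the difference of the two normalized projections; this exact elimination---the paper's combination $b\cdot$Eq.~(\ref{P1-rev-condition-1})$\,-\,a\cdot$Eq.~(\ref{P1-rev-condition-2})$\,$ leading to Eq.~(\ref{P1-consistency})---is the missing step and must be made explicit, since without it each projected equation separately carries no information at the relevant order. Once $E_1$ is carried along and eliminated in this way, the rest of your argument (choice of $\delta$ as a power of $\epsilon$, contradiction with (\ref{P1Q-LS-NLterm-condition2})) goes through as in the paper.
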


\begin{proof}
Suppose the existence of the eigenfunction of Eq. (\ref{LS-series-k}). To find an eigenfunction of small norm, we can expand the supposed eigenfunction $\phi(\textbf{x})$ as
\begin{equation}\label{eigenf-expansion}
  \phi = \epsilon( a\phi_0^a+b\phi_0^b)+ \widetilde{\phi}_1,
\end{equation}
where $|a|^2+|b|^2=1$ and $\widetilde{\phi}_1$ is orthogonal to $\phi_0^a$ and $\phi_0^b$ in $L^2$ space. Suppose the eigenvalue of $\phi$ is $E_0+E_1$ and substitute Eq. (\ref{eigenf-expansion}) into Eq. (\ref{LS-series-k}). We have
\begin{eqnarray}
  \nonumber && \left[-\Delta_{\textbf{K}} \epsilon(a\phi_0^a+b\phi_0^b) + V_{L} \epsilon(a\phi_0^a+b\phi_0^b) \right] - \Delta_{\textbf{K}} \widetilde{\phi}_1 + V_{L} \widetilde{\phi}_1  +v(|\phi_t|^2) \widetilde{\phi}_1   \\
     &=& v(|\phi_t|^2) \epsilon(a\phi_0^a+b\phi_0^b) + (E_0+E_1) \widetilde{\phi}_1 + E_1 \epsilon(a\phi_0^a+b\phi_0^b). \label{P1-original}
\end{eqnarray}
By the definition of $\phi_0^a$ and $\phi_0^b$, the first term in the left hand side vanishes. Therefore, we derive a linear, nonhomogeneous elliptical function for $\widetilde{\phi}_1$. Define $L=-\Delta_{\textbf{K}} + V_{L} - E_0$. To solve this partially differential equation, we decompose both sides of the equation into two orthogonal spaces by two operators $M_{\|}$ and $M_{\perp}$, which project $L^2(\mathbb{R}^2/\Gamma)$ into $\{\phi_0^a\} \oplus \{\phi_0^b\}$ and its orthogonal complementary space, respectively. In other words,
\begin{equation}
  M_{\perp}\phi_0^a=M_{\perp}\phi_0^b=0, \quad M_{\perp}\widetilde{\phi}_1 = \widetilde{\phi}_1
\end{equation}
and $M_{\|}=I-M_{\perp}$. Then we transform Eq. (\ref{P1-original}) into two equations:
\begin{eqnarray}
\label{P1-2Eqs-1} (L-E_1) \widetilde{\phi}_1 + M_{\perp} [v(|\phi_t|^2)\widetilde{\phi}_1] &=& -M_{\perp} [v(|\phi_t|^2)\epsilon(a\phi_0^a+b\phi_0^b)],  \\
  \label{P1-2Eqs-2} M_{\|} [v(|\phi_t|^2)\widetilde{\phi}_1] &=& E_1\epsilon(a\phi_0^a+b\phi_0^b).
\end{eqnarray}
Here we use $\langle \phi_0^a ,L\widetilde{\phi}_1\rangle=\langle \phi_0^b ,L\widetilde{\phi}_1\rangle=0$. Since the elliptic operator has a discrete spectrum, it is readily obtained that $L$ is a reversible operator in the space $M_{\perp}L_2(\mathbb{R}^2/\Gamma)$. Therefore, given $\epsilon$ and $E_1$ are sufficiently small, we have a unique solution of Eq. (\ref{P1-2Eqs-1}):
\begin{equation}\label{P1-solution}
  \widetilde{\phi}_1= -(1+ L^{-1}M_{\perp}[v(|\phi_t|^2)\cdot (\cdot)]-E_1L^{-1})^{-1} L^{-1}M_{\perp} [v(|\phi_t|^2)\epsilon(a\phi_0^a+b\phi_0^b)],
\end{equation}
where $v(|\phi_t|^2) \cdot (\cdot)$ represent the operator of multiplying a $C^\infty$ function $v(\phi_t)$ if $\phi_t \in C^\infty(\mathbb{R}^2/\Gamma)$. If $E_1$ is supposed to be small enough, the invertibility of the operator $1- L^{-1}M_{\perp}[v(|\phi_t|^2)\cdot (\cdot)+E_1]$ results from the small norm of $v(|\phi_t|^2)$ and $E_1$. Indeed, by elliptic regularity, this operator should be a reversible mapping in $M_{\perp}H^s(\mathbb{R}^2/\Gamma)$ for any s. See lemma \ref{lemma-HSregularity} in the appendix. So the $\widetilde{\phi}_1$ is also a $C^\infty$ function. Note that $\|\widetilde{\phi}_1\|_{L^2} \sim O(\epsilon^3)$ from Eq. (\ref{P1-solution}).

Now it is time to examine the consistency of the $\widetilde{\phi}_1$ given by Eq. (\ref{P1-2Eqs-1}) with Eq. (\ref{P1-2Eqs-2}). Substitute the expression of $\widetilde{\phi}_1$ into Eq. (\ref{P1-2Eqs-2}) and calculate the inner product in each side with $\phi_0^a$ and $\phi_0^b$ and then we have the following two consistency conditions:
\begin{eqnarray}
\label{P1-rev-condition-1}  \int_{\mathbb{R}^2/\Gamma} v(|\phi_t|^2)\left[\epsilon a \left|\phi_0^a\right|^2 + \epsilon b \overline{\phi_0^a}\phi_0^b+ \overline{\phi_0^a}\widetilde{\phi}_1 \right]d^2\textbf{x} &=& \epsilon E_1 a, \\
 \label{P1-rev-condition-2} \int_{\mathbb{R}^2/\Gamma} v(|\phi_t|^2)\left[\epsilon a \overline{\phi_0^b}\phi_0^a + \epsilon b \left|\phi_0^b\right|^2 + \overline{\phi_0^b}\widetilde{\phi}_1 \right]d^2\textbf{x} &=& \epsilon E_1 b.
\end{eqnarray}
Eliminate $E_1$ by the linear combinations $b\cdot$Eq. (\ref{P1-rev-condition-1})$-a\cdot$Eq. (\ref{P1-rev-condition-2}) and the consistency condition is transferred to be
\begin{equation}\label{P1-consistency}
  ab(E_a-E_b) = a^2{\overline{E}}_{int}-b^2E_{int} +\frac{1}{\epsilon}\int_{\mathbb{R}^2/\Gamma} v(|\phi_t|^2)\widetilde{\phi}_1[b\overline{{\phi}_0^a}-a\overline{\phi_0^b}] d^2\textbf{x},
\end{equation}
where
\begin{eqnarray}
  E_a &=& \int_{\mathbb{R}^2/\Gamma} v(|\phi_t|^2) \left| \phi_0^a \right|^2 d^2\textbf{x}, \\
  E_b &=& \int_{\mathbb{R}^2/\Gamma} v(|\phi_t|^2) \left| \phi_0^b \right|^2 d^2\textbf{x}, \\
  E_{int} &=& \int_{\mathbb{R}^2} v(|\phi_t|^2) \overline{\phi_0^a} \phi_0^b d^2\textbf{x}.
\end{eqnarray}
According to the condition satisfied by $\widetilde{\phi}_1$ (Eq. (\ref{P1-solution})) and $v(|\phi_t|^2)$ (Eq. (\ref{P1Q-LS-NLterm-condition1})), it is readily known from the expansion about $\epsilon$ that the last term in the right hand side of Eq.  (\ref{P1-consistency}) is of order $O(\epsilon^4)$ and
\begin{eqnarray}
  E_a &=& \epsilon^2 \left[|a|^2 I_a + |b|^2 I_{int}\right] +O(\epsilon^4),\\
  E_b &=& \epsilon^2 \left[|a|^2 I_{int} + |b|^2 I_b\right] + O(\epsilon^4),\\
  E_{int} &=& \epsilon^2 \left[a\overline{b} I_{int}\right] + O(\epsilon^4),
\end{eqnarray}
where $I_a = \int K|\phi_0^a|^4$, $I_b = \int K|\phi_0^b|^4 $ and $I_{int}=\int K|\phi_0^a|^2|\phi_0^b|^2$. Here Eq. (\ref{vanishing-honeycomb}) is used. Since $I_a=I_b$ derived by the properties of these two function aforementioned, denote $I_{one} \equiv I_a = I_b$. Therefore, Eq. (\ref{P1-consistency}) is rewritten as
\begin{equation}\label{P1-consistency-2}
  ab(|b|^2-|a|^2)[I_{one}-2I_{int}]+J(\epsilon)=0,
\end{equation}
where $J \sim O(\epsilon^2)$. Let $|P| < C\epsilon^2$. Suppose the converse of conditions Eqs. ($\ref{P1Q-condition-1}$) and ($\ref{P1Q-condition-2}$) in Proposition \ref{Prop-1LS}. If $\delta=|C\epsilon/(I_{one}-2I_{int})|^{1/3}$, the left hand side of Eq. (\ref{P1-consistency-2}) must be greater than 0, contradicting the existence of the eigenfunction of (\ref{LS-series-k}).
\end{proof}

\section{Uniqueness and Radial separability of the allowed parameter space}\label{Sec-uni-and-radial-sepa}
We have discussed the necessary conditions for ones to find the eigenfunctions of the nonlinear system. Only in a highly restricted parameter space for $a$ and $b$ mentioned above can the existence of the eigenfunction be proved. It gives a strong hint that the original two-dimensional linear space of the stationary wavefunction space spontaneously decays into limited cases in the parameter space. Now we further investigate the properties of the parameter space.

In the following passage, we define the full parameter space $\Sigma = \{(a,b)| |a|^2 +|b|^2 =1 \}$. Here $a$ and $b$ share the same meaning as defined in section \ref{sec-limitation-to-allowed-space}. Apparently, the eigenfunction is physically unchanged if multiplied by an constant complex number of norm $1$. This equivalence can be even expanded if consider the linear system, since now the eigenfunction forms a linear space and is even unchanged after multiplication by a constant without the restriction of norm 1, although it is not the case here. For this reason, we redefine a new quotient space of $\Sigma$ as $\Sigma_q = \Sigma / T$, where $T$ is a set of binary equivalence relations $\{(a,b)\sim (c,d)| ad-bc=0\}$. If the topology of $\Sigma_q$ is inherited from the natural topology of metric space $\mathbb{C}^2$, then it is readily seen the homeomorphism of $\Sigma_q$ to $S^2$ in $\mathbb{R}^2$.

For the nonlinear equation, we subsequently define the allowed parameter space $\Sigma_q^a(\epsilon)$, which is the set of all the allowed pairs $(a,b,\epsilon)$ for which there at least exists one eigenfunction $\phi \in H^2$ of the nonlinear Hamiltonian with the eigenvalue $E$ such that $\|\phi\|_{L^2}\le \epsilon$ and $|E-E_0|\le \epsilon$, where we set $\epsilon<\epsilon_0$ such that Eq. (\ref{P1-solution}) is well-defined. Given all the preparation, we firstly consider the Lipschitz continuity of $\phi$ in the allowed parameter space:
\begin{lemma}\label{Lemma1-Lpcont}
For sufficiently small $\epsilon>0$, suppose $(a^{(1)},b^{(1)})$, $(a^{(2)},b^{(2)})$ are both pairs in $\Sigma_q^a(\epsilon)$. Then for any pairs $(a^{(1)},b^{(1)},E^{(1)},\phi^{(1)})$ and $(a^{(2)},b^{(2)},E^{(2)},\phi^{(2)})$ satisfying the condition of Proposition \ref{Prop-1LS}, we have the following relation:
\begin{equation}\label{L1-Lipschitz}
  \|M_{\perp}(\phi^{(1)}-\phi^{(2)})\|_{H^s(\mathbb{R}^2/\Gamma)} \le C\left(|a^{(1)}-a^{(2)}|+|b^{(1)}-b^{(2)}|\right)\epsilon^3
\end{equation}
for any $s \ge 2$, where $C$ is only dependant of $V_{NL}$ and $s$.
\end{lemma}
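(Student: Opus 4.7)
My plan is to subtract the two nonhomogeneous elliptic equations characterizing $\widetilde{\phi}_1^{(i)} \equiv M_{\perp}\phi^{(i)}$ obtained in the proof of Proposition \ref{Prop-1LS}, invert the perturbed elliptic operator on $M_{\perp}H^s(\mathbb{R}^2/\Gamma)$, and read off the Lipschitz bound. Writing $v_i \equiv v(\textbf{x},|\phi^{(i)}|^2)$ and $E_1^{(i)} \equiv E^{(i)} - E_0$, each $\widetilde{\phi}_1^{(i)}$ satisfies
\begin{equation*}
  (L - E_1^{(i)})\widetilde{\phi}_1^{(i)} + M_{\perp}[v_i \widetilde{\phi}_1^{(i)}] = -\epsilon M_{\perp}\bigl[v_i (a^{(i)}\phi_0^a + b^{(i)}\phi_0^b)\bigr],
\end{equation*}
and subtracting these gives a linear elliptic equation for $\Psi \equiv \widetilde{\phi}_1^{(1)} - \widetilde{\phi}_1^{(2)}$ of the form
\begin{equation*}
  (L - E_1^{(1)})\Psi + M_{\perp}[v_1 \Psi] = -\epsilon M_{\perp}\bigl[v_1\bigl((a^{(1)}-a^{(2)})\phi_0^a + (b^{(1)}-b^{(2)})\phi_0^b\bigr)\bigr] + R,
\end{equation*}
where $R$ collects source terms proportional to $v_1 - v_2$, to $(E_1^{(1)} - E_1^{(2)})\widetilde{\phi}_1^{(2)}$, and to $v_2$ paired with $\widetilde{\phi}_1^{(2)}$ and the parameter differences.

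Three order-of-magnitude estimates will drive the proof: $\|v_i\|_{H^s} = O(\epsilon^2)$, $\|\widetilde{\phi}_1^{(i)}\|_{H^s} = O(\epsilon^3)$ from the closed-form Eq. (\ref{P1-solution}), and a Lipschitz estimate for the composition. Using $\phi^{(1)} - \phi^{(2)} = \epsilon\bigl[(a^{(1)}-a^{(2)})\phi_0^a + (b^{(1)}-b^{(2)})\phi_0^b\bigr] + \Psi$, the Taylor expansion of the $C^\infty$ nonlinear potential $V_{NL}$, and the algebra property of $H^s$ in two dimensions (valid since $s \ge 2 > 1$), one obtains
\begin{equation*}
  \|v_1 - v_2\|_{H^s} \le C\epsilon\bigl[\epsilon\bigl(|a^{(1)}-a^{(2)}| + |b^{(1)}-b^{(2)}|\bigr) + \|\Psi\|_{H^s}\bigr].
\end{equation*}
Applying $(L - E_1^{(1)} + M_{\perp}[v_1 \cdot])^{-1}$, whose norm on $M_{\perp}H^s$ is bounded uniformly in $\epsilon$ by Lemma \ref{lemma-HSregularity} combined with the smallness of $v_1$ and $E_1^{(1)}$, yields the schematic inequality
\begin{equation*}
  \|\Psi\|_{H^s} \le C\epsilon^3\bigl(|a^{(1)}-a^{(2)}|+|b^{(1)}-b^{(2)}|\bigr) + C\epsilon^2 \|\Psi\|_{H^s} + C\epsilon^3 \bigl|E_1^{(1)} - E_1^{(2)}\bigr|,
\end{equation*}
in which the implicit $\|\Psi\|_{H^s}$ term is absorbed into the left-hand side once $\epsilon$ is small enough.

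It remains to handle $|E_1^{(1)} - E_1^{(2)}|$, which is not given a priori. Subtracting the two consistency conditions Eq. (\ref{P1-rev-condition-1})--(\ref{P1-rev-condition-2}) for the pairs $(a^{(i)},b^{(i)})$ and invoking the same Lipschitz estimate for $v_1 - v_2$ should produce $|E_1^{(1)} - E_1^{(2)}| \le C\epsilon^2\bigl(|a^{(1)}-a^{(2)}|+|b^{(1)}-b^{(2)}|\bigr)$, contributing only a subleading $O(\epsilon^5)$ correction that is absorbed into the target bound. The main obstacle I expect is the \emph{bookkeeping}: every term produced by the subtraction must carry either a factor of $|a^{(1)}-a^{(2)}|+|b^{(1)}-b^{(2)}|$ multiplied by the correct power of $\epsilon$, or a factor of $\|\Psi\|_{H^s}$ with a prefactor small enough to be absorbed. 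A secondary subtlety is that the $E_1$ difference must be controlled \emph{before} closing the implicit estimate for $\Psi$, which forces one to finite-difference the consistency conditions of Proposition \ref{Prop-1LS} as a preliminary step.
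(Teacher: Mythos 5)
Your proposal follows essentially the same route as the paper's proof: difference the two fixed-point equations for $\widetilde{\phi}_1^{(i)}=M_\perp\phi^{(i)}$, use the smoothness of $v$ (the paper writes the divided difference $D(x,y)=\frac{v(x)-v(y)}{x-y}$ where you invoke a Taylor expansion), control $E_1^{(1)}-E_1^{(2)}$ by finite-differencing a consistency condition, and absorb the resulting $O(\epsilon^2)\|\Psi\|_{H^s}$ term for small $\epsilon$. The only detail you leave to ``bookkeeping'' that the paper makes explicit is a case split: when extracting $E_1^{(1)}-E_1^{(2)}$ one must divide by $a^{(i)}$ (or $b^{(i)}$), so the paper first disposes of the case where the parameter differences are bounded below (where $\|\widetilde{\phi}_1^{(i)}\|_{H^s}=O(\epsilon^3)$ alone suffices) and then assumes WLOG $|a^{(1)}|>1/\sqrt{2}$ with $|a^{(1)}-a^{(2)}|$ small, differencing only Eq.~(\ref{P1-rev-condition-1}).
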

\begin{proof}
Substitute the two pairs $(a^{(i)},b^{(i)},E^{(i)},\phi^{(i)})$ $(i=1,2)$ into Eq. (\ref{P1-2Eqs-1}) and calculate the difference
\begin{eqnarray}
  \nonumber & & L\left(\widetilde{\phi}^{(1)}_1-\widetilde{\phi}^{(2)}_1\right)-\left(E_1^{(1)}\widetilde{\phi}^{(1)}_1-E_{(2)}\widetilde{\phi}^{(2)}_1\right)+M_{\perp}\left[v(|\phi^1|^2)\left(\widetilde{\phi}_1^{(1)}-\widetilde{\phi}_1^{(2)}\right)\right]\\
  \nonumber & & +M_{\perp}\left[\widetilde{\phi}_1^{(2)}\left(v\left(|\phi^{(1)}|^2\right)-v\left(|\phi^{(2)}|^2\right)\right)\right] \\
 \nonumber &=&  -\epsilon M_\perp \left[v\left(|\phi^{(1)}|^2\right)\left(a^{(1)}-a^{(2)}\right)\phi_0^a\right]-\epsilon M_\perp \left[v\left(|\phi^{(1)}|^2\right)\left(b^{(1)}-b^{(2)}\right)\phi_0^b\right]  \\
 & &-\epsilon M_\perp \left[\left(v\left(|\phi^{(1)}|^2\right)-v\left(|\phi^{(2)}|^2\right)\right) \left(a^{(2)}\phi_0^a +b^{(2)} \phi_0^b\right)\right].
\end{eqnarray}
For $v(x) \in C^{\infty}(\mathbb{R})$, it is readily known that $D(x,y)=\frac{v(x)-v(y)}{x-y}$ is a $C^{\infty}(\mathbb{R}^2)$ function. Therefore, we further transfer the equation as
\begin{eqnarray}
  \nonumber  L\left(\widetilde{\phi}^{(1)}_1-\widetilde{\phi}^{(2)}_1\right)&=&E_1^{(1)}\left(\widetilde{\phi}^{(1)}_1-\widetilde{\phi}^{(2)}_1\right)-M_{\perp}\left[v\left(|\phi^{(1)}|^2\right)\left(\widetilde{\phi}_1^{(1)}-\widetilde{\phi}_1^{(2)}\right)\right] \\
   \nonumber &-& M_\perp\left[ D\left(|\phi^{(1)}|^2,|\phi^{(2)}|^2\right)\left(|\phi^{(1)}|^2-|\phi^{(2)}|^2\right)\phi^{(2)}\right] \\
   \nonumber &-& \epsilon M_\perp\left[\left(\left(a^{(1)}-a^{(2)}\right)\phi_0^a+\left(b^{(1)}-b^{(2)}\right)\phi_0^b\right) v(|\phi^{(1)}|^2)\right] \\
  & +& \left(E^{(2)}_1-E^{(1)}_1\right)\widetilde{\phi}_1^{(2)}.
\end{eqnarray}
The properties to be unravelled are the difference $E^{(2)}-E^{(1)}$. Here we invoke the Eqs. (\ref{P1-rev-condition-1}) and (\ref{P1-rev-condition-2}). Note that we only need to consider the case when $a^{(1)}-a^{(2)}$ and $b^{(1)}-b^{(2)}$ are sufficiently small. For both the differences are greater than a certain number, say $1/2\sqrt{2}$, we only need to prove $\widetilde{\phi}_1^{(i)} \sim O(\epsilon^3)$, which is already proved in Proposition \ref{Prop-1LS}. Then suppose $a^{(1)}>1/\sqrt{2}$ and $a^{(1)}-a^{(2)}< 1/(2\sqrt{2})$ without loss of generality. We just need to pick Eq. (\ref{P1-rev-condition-1}) and get:
\begin{eqnarray}
  &&\nonumber E^{(1)}_1-E^{(2)}_1 = \int_{\mathbb{R}^2/\Gamma} \left\{v(|\phi^{(1)}|^2)\left[\left(\frac{b^{(1)}}{a^{(1)}}-\frac{b^{(2)}}{a^{(2)}}\right) \phi_0^b\overline{\phi_0^a} + \frac{\left(\widetilde{\phi}_1^{(1)}-\widetilde{\phi}_1^{(2)}\right)\overline{\phi_0^a}}{\epsilon a^{(1)}} -\frac{\left(a^{(1)}-a^{(2)}\right)\widetilde{\phi}_1^{(2)}\overline{\phi_0^a}}{\epsilon a^{(1)}a^{(2)}} \right] \right. \\
  &+& \left. D\left(|\phi^{(1)}|^2,|\phi^{(2)}|^2\right)\left(|\phi^{(1)}|^2-|\phi^{(1)}|^2\right)\left[ \epsilon  |\phi_0^a|^2 +\epsilon \frac{b^{(2)}}{a^{(2)}} \overline{\phi_0^a} \phi_0^b +\frac{1}{a^{(2)}} \overline{\phi_0^a}\widetilde{\phi}_1^{(2)} \right] \right\} d^2\textbf{x}. \label{L1-eigenvalue-comp}
\end{eqnarray}
To disentangle the absolute value sign, note that for any $A,B,C,D$, $|A+B|-|C+D| \le |A+B-C-D| \le |A-C|+ |B-D|$. Let $A=\widetilde{\phi}_1^{(1)}$, $B=a^{(1)}\phi_0^a+b^{(1)}\phi_0^b$, $C=\widetilde{\phi}_1^{(2)}$ and $D=a^{(2)}\phi_0^a+b^{(2)}\phi_0^b$. In sum, we can estimate $\widetilde{\phi}_1^{(1)}-\widetilde{\phi}_1^{(2)}$ as
\begin{equation}
\|\widetilde{\phi}_1^{(1)} -\widetilde{\phi}_1^{(2)} \|_{H^s} \le C_1\epsilon^2 \|\widetilde{\phi}_1^{(1)} -\widetilde{\phi}_1^{(2)} \|_{H^s} + C_2\epsilon^3\left(|a^{(1)}-a_{(2)}|+|b_{(1)}-b_{(2)}|\right).
\end{equation}
Here we take use of the finite norms of $L^{-1}$ and $M_\perp$, the $C^{\infty}$ properties of $v(x)$ and several estimations: $v(|\phi^{(i)}|^2) \sim O (\epsilon^2)$, $D(|\phi_1^{(1)}|^2,|\phi_1^{(2)}|^2)\sim O(\epsilon)$ and $\widetilde{\phi}_1^{(i)} \sim O(\epsilon^3)$. For the $H^s$ regularity, we use the arguments in Lemma \ref{lemma-HSregularity}. Then the proof is complete.
\end{proof}

For $a^{(1)}=a^{(2)}$, $b^{(1)}=b^{(2)}$, we readily obtain $\widetilde{\phi}_1^{(1)}=\widetilde{\phi}_1^{(2)}$, which means the uniqueness of the solution:
\begin{proposition}[Uniqueness]\label{Prop-uniqueness}
For $(a,b)$ an allowed pair in $\Sigma_q^a (\epsilon)$, there is only one eigenfunction $\phi$ of Eq. (\ref{NLS-k}), or in an equivalent meaning, an eigenfunction of Eq. (\ref{LS-series-k}) with $\phi_t=\phi$ that satisfy the conditions in Proposition \ref{Prop-1LS}.
\end{proposition}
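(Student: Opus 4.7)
The plan is to derive the proposition as an immediate corollary of Lemma \ref{Lemma1-Lpcont}. Specifically, I would let $\phi^{(1)}, \phi^{(2)} \in H^2(\mathbb{R}^2/\Gamma)$ be any two eigenfunctions of the nonlinear equation (\ref{NLS-k}) -- equivalently, eigenfunctions of (\ref{LS-series-k}) with the self-consistent choice $\phi_t = \phi^{(i)}$ -- that are both associated with the same pair $(a,b) \in \Sigma_q^a(\epsilon)$ in the sense of the decomposition (\ref{eigenf-expansion}). Thus
\begin{equation*}
\phi^{(i)} = \epsilon\bigl(a\phi_0^a + b\phi_0^b\bigr) + \widetilde{\phi}^{(i)}_1, \qquad i = 1, 2,
\end{equation*}
with $\widetilde{\phi}^{(i)}_1 \in M_{\perp} L^2(\mathbb{R}^2/\Gamma)$ and corresponding eigenvalues $E_0 + E_1^{(i)}$.

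First I would verify that both quadruples $(a,b,E^{(i)},\phi^{(i)})$ satisfy the hypotheses of Proposition \ref{Prop-1LS}, which is automatic because each $\phi^{(i)}$ is used simultaneously as the eigenfunction and as the test function $\phi_t$, so condition (\ref{P1Q-LS-NLterm-condition1}) follows once $\epsilon$ is small enough that $\|\widetilde{\phi}^{(i)}_1\|_{L^2} \sim O(\epsilon^3)$, as already established in the proof of Proposition \ref{Prop-1LS}. Then I would apply the Lipschitz bound (\ref{L1-Lipschitz}) in Lemma \ref{Lemma1-Lpcont} with $a^{(1)} = a^{(2)} = a$ and $b^{(1)} = b^{(2)} = b$; the right-hand side collapses to zero, yielding
\begin{equation*}
\|M_\perp(\phi^{(1)} - \phi^{(2)})\|_{H^s(\mathbb{R}^2/\Gamma)} = 0
\end{equation*}
for every $s \ge 2$. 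Hence $\widetilde{\phi}^{(1)}_1 = \widetilde{\phi}^{(2)}_1$, and since the parallel components of $\phi^{(1)}$ and $\phi^{(2)}$ coincide by construction, we conclude $\phi^{(1)} = \phi^{(2)}$. The eigenvalues must then agree as well, which can be seen directly by reading $E_1^{(i)}$ off the consistency equation (\ref{P1-rev-condition-1}).

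The main obstacle is not the collapse of the bound itself, which is genuinely a one-line consequence of the lemma, but rather the book-keeping needed to justify that Lemma \ref{Lemma1-Lpcont} is actually applicable in the fully nonlinear situation where $\phi_t$ is not fixed a priori but set equal to the unknown eigenfunction. I would therefore spell out that equations (\ref{P1-2Eqs-1})--(\ref{P1-2Eqs-2}) remain valid under the substitution $\phi_t = \phi^{(i)}$ -- the invertibility of $1 + L^{-1}M_\perp[v(|\phi^{(i)}|^2)\cdot(\cdot)] - E_1^{(i)}L^{-1}$ is still ensured for $\epsilon$ small because $\|v(|\phi^{(i)}|^2)\|$ and $|E_1^{(i)}|$ remain $O(\epsilon^2)$ -- and that the constant $C$ in (\ref{L1-Lipschitz}) depends only on $V_{NL}$ and $s$, not on which $\phi_t$ was used. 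Once these points are clarified, strict uniqueness follows.
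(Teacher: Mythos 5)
Your proposal is correct and matches the paper's own argument exactly: the paper derives Proposition \ref{Prop-uniqueness} as an immediate corollary of Lemma \ref{Lemma1-Lpcont} by setting $a^{(1)}=a^{(2)}$, $b^{(1)}=b^{(2)}$ so that the Lipschitz bound forces $\widetilde{\phi}_1^{(1)}=\widetilde{\phi}_1^{(2)}$. Your additional remarks on why the lemma remains applicable when $\phi_t$ is taken to be the unknown eigenfunction itself are a reasonable elaboration of a point the paper leaves implicit, but they do not constitute a different route.
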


It endows each allowed parameter pair a bijection towards an eigenfunction. A more crucial fact is we can actually prove the following proposition of the radial separability of the allowed parameter space in small $\epsilon$. Based on the restriction of allowed parameter space given in Proposition \ref{Prop-1LS}, below the Proposition \ref{prop-rad-separability} excludes a wide internal around a given allowed pair of the whole parameter space in the radial direction. It indeed implies the allowed parameter space can only be at most one-dimensional, showing a giant progress toward the main theorem.

\begin{proposition}[Radial Separability]\label{prop-rad-separability}
For the pair $\left(a^{(1)},b^{(1)}\right)=(\cos(\theta) e^{i\alpha}, \sin(\theta) e^{i\beta})$ in $\Sigma_q^a(\epsilon)$. Suppose there exists an eigenfunction $\phi$ of Eq. (\ref{NLS-k}), i.e. an eigenfunction of Eq. (\ref{LS-series-k}) with $\phi_t=\phi$, that satisfies the condition given in Proposition \ref{Prop-1LS}. Then the following statements of the radial separability hold:

(1) If there exists $\delta>0$ such that $|a^{(1)}| \ge \delta$ and $|b^{(1)}| \ge \delta$, then for sufficiently small $\epsilon$, there is not another distinct allowed pair $\left(a^{(2)},b^{(2)}\right)=(\cos(\theta') e^{i\alpha}, \sin(\theta') e^{i\beta})$ for which there exists an eigenfunction of Eq. (\ref{NLS-k}) that also satisfies the same condition while $|a^{(2)}| \ge \delta$ and $|b^{(2)}| \ge \delta$.

(2) If $a^{(1)}=0$ or $b^{(1)}=0$, then there exist $\delta>0$ such that for sufficiently small $\epsilon>0$, there is not another distinct allowed pair $(a^{(2)},b^{(2)})$ for which there exists an eigenfunction of Eq. (\ref{NLS-k}) that also satisfies the same condition while $|a^{(2)}| \le \delta$ or $|b^{(2)}| \le \delta$.
\end{proposition}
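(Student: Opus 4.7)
The plan is to exploit the consistency relation Eq. (\ref{P1-consistency-2}) derived during the proof of Proposition \ref{Prop-1LS}. For any allowed pair $(a,b)\in\Sigma_q^a(\epsilon)$ the associated eigenfunction satisfies
\[
F_0(a,b)+\widetilde{J}(a,b,\epsilon)=0,\qquad F_0(a,b):=ab(|a|^2-|b|^2)(I_{one}-2I_{int}),
\]
where $\widetilde{J}$ collects the contributions from the quartic and higher remainder of $v$ together with the $\widetilde{\phi}_1$-integral term and satisfies $|\widetilde{J}|\le C\epsilon^2$. The first step is to upgrade this bound to a Lipschitz estimate
\[
\bigl|\widetilde{J}(a^{(1)},b^{(1)},\epsilon)-\widetilde{J}(a^{(2)},b^{(2)},\epsilon)\bigr|\le C\epsilon^2\bigl(|a^{(1)}-a^{(2)}|+|b^{(1)}-b^{(2)}|\bigr),
\]
obtained by differentiating each summand explicitly in $(a,b)$ and invoking Lemma \ref{Lemma1-Lpcont} to control the implicit dependence through $\widetilde{\phi}_1^{(i)}$; the cubic scaling $\|\widetilde{\phi}_1^{(i)}\|_{H^s}=O(\epsilon^3)$ absorbs the prefactor $1/\epsilon$ in front of the integral term in the consistency relation.

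For statement (1), I write $a^{(i)}=\cos\theta_i e^{i\alpha}$, $b^{(i)}=\sin\theta_i e^{i\beta}$, whence
\[
F_0(\theta_i)=\tfrac{1}{4}e^{i(\alpha+\beta)}\sin(4\theta_i)(I_{one}-2I_{int}).
\]
The bound $|\widetilde{J}|=O(\epsilon^2)$ already forces $|\sin(4\theta_i)|=O(\epsilon^2/\delta^2)$, and inside the band $|a^{(i)}|,|b^{(i)}|\ge\delta$ the only zero of $\sin(4\theta)$ is at $\pi/4$, so both $\theta_i$ lie within $O(\epsilon^2)$ of $\pi/4$. Subtracting the two consistency equations and applying the mean value theorem at an intermediate $\xi$ with $\cos(4\xi)\approx-1$, I get
\[
|F_0(\theta_1)-F_0(\theta_2)|\ge \tfrac{1}{2}|I_{one}-2I_{int}|\,|\theta_1-\theta_2|,
\]
while Step~1 bounds the difference of the $\widetilde{J}$-values by $C\epsilon^2|\theta_1-\theta_2|$. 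For $\epsilon$ small enough this is only consistent with $\theta_1=\theta_2$, so the two pairs agree in $\Sigma_q$.

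For statement (2), I normalize the representative so that (say) $a^{(1)}=0$, $b^{(1)}=1$. If $(a^{(2)},b^{(2)})$ is another allowed pair with $|a^{(2)}|\le\delta$, applying the consistency equation at $(a^{(2)},b^{(2)})$ alone already forces $|a^{(2)}|=O(\epsilon^2)$ because the factor $|b^{(2)}|^2-|a^{(2)}|^2$ is bounded below by $1-2\delta^2>1/2$. Subtracting the (trivially satisfied) equation at $(0,1)$ yields $|F_0(a^{(2)},b^{(2)})|\ge \tfrac{1}{2}|I_{one}-2I_{int}|\,|a^{(2)}|$, while Step~1 bounds the $\widetilde{J}$-difference by $C\epsilon^2|a^{(2)}|$; hence $a^{(2)}=0$ and the two pairs coincide. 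The symmetric sub-case $|b^{(2)}|\le\delta$ is handled identically after reversing the roles of the two components. The main obstacle is Step~1: $\widetilde{J}$ depends on $(a,b)$ both explicitly through its integral coefficients and implicitly through the fixed-point Eq. (\ref{P1-solution}) defining $\widetilde{\phi}_1$, so showing that the two mechanisms combine into a Lipschitz constant of the precise order $\epsilon^2$ demands careful bookkeeping of how many powers of $\epsilon$ are gained when the derivative in $(a,b)$ falls on the expansion of $v$ versus on $\widetilde{\phi}_1$.
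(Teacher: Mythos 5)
Your proposal is correct and follows essentially the same route as the paper: both arguments rest on the consistency conditions derived in the proof of Proposition \ref{Prop-1LS} together with the Lipschitz continuity supplied by Lemma \ref{Lemma1-Lpcont}, and both reduce the matter to the nondegeneracy of $I_{one}-2I_{int}$ at order $\epsilon^2$. The only difference is bookkeeping --- the paper differences Eqs. (\ref{P1-rev-condition-1}) and (\ref{P1-rev-condition-2}) separately between the two pairs and equates the two resulting expressions for $E_1^{(1)}-E_1^{(2)}$, whereas you difference the already $E_1$-eliminated combination (\ref{P1-consistency-2}); the Lipschitz estimate on $\widetilde{J}$ that you flag as the main obstacle is exactly what Lemma \ref{Lemma1-Lpcont} provides.
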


\begin{proof}
We start the proof from Eqs. (\ref{P1-rev-condition-1}) and (\ref{P1-rev-condition-2}). Substitute $(a^{(i)},b^{(i)},E^{(i)},\phi^{(i)})$ $(i=1,2)$ into the equations and calculate the differences we have:
\begin{eqnarray}
 \nonumber && E_1^{(1)}\left(a^{(1)}-a^{(2)}\right)+\left(E_1^{(1)}-E_1^{(2)}\right) \\
  \nonumber &=& \int_{\mathbb{R}^2/\Gamma} \left\{v\left(|\phi^{(1)}|^2\right)\left[\left(a^{(1)}-a^{(2)}\right)|\phi_0^a|^2+\left(b^{(1)}-b^{(2)}\right) \overline{\phi_0^a}\phi_0^b + \frac{\overline{\phi_0^a} \left(\widetilde{\phi}_1^{(1)}-\widetilde{\phi}_1^{(2)}\right)}{\epsilon} \right] \right.  \\
    &+& \left.\left[v\left(|\phi_1^{(1)}|^2\right) - v\left(|\phi_1^{(2)}|^2\right)\right]\left(a^{(2)}|\phi_0^a|^2+b^{(2)}\overline{\phi_0^a}\phi_0^b +\frac{\overline{\phi_0^a} \widetilde{\phi}_1^{(2)}}{\epsilon}\right) \right\} d^2\textbf{x},\\
 \nonumber && E_1^{(1)}\left(b^{(1)}-b^{(2)}\right)+\left(E_1^{(1)}-E_1^{(2)}\right) \\
 \nonumber &=& \int_{\mathbb{R}^2/\Gamma} \left\{v\left(\phi^{(1)}\right)\left[\left(b^{(1)}-b^{(2)}\right)|\phi_0^b|^2+\left(a^{(1)}-a^{(2)}\right) \overline{\phi_0^b}\phi_0^a + \frac{\overline{\phi_0^b} \left(\widetilde{\phi}_1^{(1)}-\widetilde{\phi}_1^{(2)}\right)}{\epsilon} \right] \right.  \\
    &+& \left.\left[v\left(\phi_1^{(1)}\right) - v\left(\phi_1^{(2)}\right)\right]\left(b^{(2)}|\phi_0^b|^2+a^{(2)}\overline{\phi_0^b}\phi_0^a +\frac{\overline{\phi_0^b} \widetilde{\phi}_1^{(2)}}{\epsilon} \right) \right\} d^2\textbf{x}.
\end{eqnarray}
By $v(|\phi|^2)=K(\textbf{x})|\phi|^2+O(|\phi|^4)$ and Lemma \ref{Lemma1-Lpcont}, we can figure out the terms of order $\epsilon^2$:
\begin{eqnarray}
 \nonumber && E_1^{(1)} \left(a^{(1)}-a^{(2)}\right)+\left(E_1^{(1)}-E_1^{(2)}\right)a^{(2)} \\
 \nonumber &=& \int_{\mathbb{R}^2/\Gamma} \left\{ \epsilon^2\left[\left(a^{(1)}-a^{(2)}\right) |\phi_0^a|^2 + \left(b^{(1)}-b^{(2)}\right)\overline{\phi_0^a}\phi_0^b \right]K\left|a^{(1)}\phi_0^a+b^{(1)}\phi_0^b\right|^2\right. \\
  \nonumber &+&  \epsilon^2 K\left(\left|a^{(1)}\phi_0^a+b^{(1)}\phi_0^b\right|^2-\left|a^{(2)}\phi_0^a+b^{(2)}\phi_0^b\right|^2\right)\left(a^{(2)}|\phi_0^a|^2 +b^{(2)}\overline{\phi_0^a} \phi_0^b\right) \\
   &+& \left. \left(|a^{(1)}-a^{(2)}|+|b^{(1)}-b^{(2)}|\right) O(\epsilon^4) \right\} d^2\textbf{x}, \\
 \nonumber && E_1^{(1)}\left(b^{(1)}-b^{(2)}\right)+\left(E_1^{(1)}-E_1^{(2)}\right)b^{(2)}\\
  \nonumber &=& \int_{\mathbb{R}^2/\Gamma} \left\{ \epsilon^2\left[\left(b^{(1)}-b^{(2)}\right) |\phi_0^b|^2 + \left(a^{(1)}-a^{(2)}\right)\overline{\phi_0^b}\phi_0^a \right]K\left|a^{(1)}\phi_0^a+b^{(1)}\phi_0^b\right|^2\right. \\
  \nonumber &+&  \epsilon^2 K\left(\left|a^{(1)}\phi_0^a+b^{(1)}\phi_0^b\right|^2-\left|a^{(2)}\phi_0^a+b^{(2)}\phi_0^b\right|^2\right)\left(b^{(2)}|\phi_0^b|^2 +a^{(2)}\overline{\phi_0^b} \phi_0^a\right) \\
   &+& \left. \left(|a^{(1)}-a^{(2)}|+|b^{(1)}-b^{(2)}|\right)O(\epsilon^4) \right\} d^2\textbf{x}.
\end{eqnarray}
Like the arguments in Proposition \ref{Prop-1LS}, we rewrite the above equations in the form of $I_{int}$ and $I_{one}$ as
\begin{eqnarray}
  \nonumber E_1^{(1)}-E_1^{(2)} &=& \frac{a^{(1)}-a^{(2)}}{a^{(2)}}\left(2\epsilon^2|b^{(1)}|^2I_{int}+\epsilon^2|a^{(1)}|^2I_{one}-E_1^{(1)}\right)\\
   &+& \epsilon^2\left(2|b^{(1)}|^2-2|b^{(2)}|\right)I_{int} +\epsilon^2\left(|a^{(1)}|^2-|a^{(2)}|^2\right)I_{one}, \label{P3-coreidentity1} \\
  \nonumber E_1^{(1)}-E_1^{(2)} &=& \frac{b^{(1)}-b^{(2)}}{b^{(2)}}\left(2\epsilon^2|a^{(1)}|^2I_{int}+\epsilon^2|b^{(1)}|^2I_{one}-E_1^{(1)}\right)\\
   &+& \epsilon^2\left(2|a^{(1)}|^2-2|a^{(2)}|\right)I_{int} +\epsilon^2\left(|b^{(1)}|^2-|b^{(2)}|^2\right)I_{one}. \label{P3-coreidentity2}
\end{eqnarray}
Here we complete the proof of the radial separability by classifying them into two cases:

(i) In the case (1) of the proposition, suppose the existences of both two allowed pairs. Remark that the first terms of Eqs. (\ref{P3-coreidentity1}) and (\ref{P3-coreidentity2}) are of order $(|a^{(1)}-a^{(2)}|+|b^{(1)}-b^{(2)}|)/\delta O(\epsilon^4)$ by Proposition \ref{Prop-1LS}. Then $2(2I_{int}-I_{one})(|a^{(1)}|^2-|a^{(2)}|^2)(1+O(\epsilon^2)/{\delta})=0$, which is contradictory when $\epsilon$ is much smaller than $\delta^{1/2}$.

(ii) In the case (2) of the proposition, suppose $a^{(1)}=1$ without loss of generality and the existences of both two allowed pairs. We firstly derive $E_1^{(1)}=\epsilon^2 I_{one} + O(\epsilon^4)$, $E_1^{(2)}= \epsilon^2 I_{one} + O(\epsilon^4)$ from Eq. (\ref{P1-rev-condition-1}) in Proposition \ref{Prop-1LS}. Therefore we have $2(2I_{int}-I_{one})O(\delta)+[2I_{int}-I_{one}+O(\epsilon^2)][1+O(\delta)]=0$. Then we can choose a $\delta$ such that $|O(\delta)|<1/4$, say. At this time, for sufficiently small $\epsilon$ this identity leads to a contradictory.
\end{proof}

It is noteworthy to point out the different preconditions of the two cases. In the first case, as $\epsilon$ goes to $0$, $\delta$ can also tend to $0$, means if there is an allowed pair $(a,b)$ in the first case, then for a long region in the radial direction except the neighborhoods of $(0,1)$ and $(1,0)$, there is no another different one. So it is a powerful tool to exclude the unallowed cases. On the other hand, there is no way from the existence of allowed pairs of the first case to claim the impossibilities of $(0,1)$ and $(1,0)$ being allowed pairs. Therefore, it is necessary to refer to the second case in order to have a complete understanding of the radial separability.

\section{A bootstrap method for the construction of eigenfunctions}\label{sec-main-results}
All the preparation for the proof of the main theorem has been done. In the following passage we focus on the existence of eigenfunctions for certain pairs in $\Sigma_q$. Based on the previous propositions, there are clearly two kinds of potential candidates worth being considered: two "polar" points $(0,1)$, $(1,0)$ and one "equator" $(a,b)$ with $|a|=|b|$. Once they are proved to be in allowed parameter space $\Sigma_q^a$, they are just all the allowed pairs of $\Sigma_q^a$. However, actually things are different in these two cases: While we do construct the corresponding eigenfunctions for the polar points $(0,1)$ and $(1,0)$, we can only obtain a pseudo or approximate eigenfunctions for the generic points of the equator circle. A deeper perturbation analysis gives generally only in 6 points we can get the true eigenfunctions.

Through the demonstration, a bootstrap method will be repeatedly used to construct several convergent series of functions, the limits of which are our targeted eigenfunctions or the pseudo eigenfunctions and thus the existence part of the eigenvalue problem is complete. Fundamentally, this bootstrap method is a rewording of the perturbation theory, which are well-known to and frequently used by physicists. However, a rigorous proof of the existence of eigenfunctions after the so-called perturbation and their regularity will be given in this article.

In order to construct the subsequent reasoning, it is useful to find a complete orthogonal basis of the space $L^2_{\textbf{K}}(\mathbb{R}^2/\Gamma)$. A natural choice is the eigenfunctions of the linear Hamiltonian $H_{L}=-\Delta_\textbf{K}+V_{L}(\textbf{x})$, the completeness of which is guaranteed by the elliptical operator theory. For reference, we denote $(\phi_0^i,E_0^i)$ ($i \in Z^{+}$) as the other eigenfunctions of $H_{L}$ except $\phi_0^a$ and $\phi_0^b$ and their corresponding eigenvalues. Recall that in section \ref{sec-prop-linear} each of these eigenfunctions is also classified into one of the three subspaces: $L^2_{\textbf{K},1}$, $L^2_{\textbf{K},\omega}$ and $L^2_{\textbf{K},\overline{\omega}}$. We denote $i_{1}$, $i_{\omega}$, $i_{\overline{\omega}}$ as the index of the eigenfunctions $\phi_0^{i}$ which lie in the three subspaces, respectively.

\begin{theorem}\label{theorem-main}
Suppose $V_{NL}(\textbf{x},|\psi_i(\textbf{x})|^2)$ in Eq. (\ref{NLS-k}) defined as that in Proposition \ref{Prop-1LS}. If $K(\textbf{x})$ satisfies $I_{one}-2I_{int} \neq 0$, then for sufficiently small $\epsilon>0$, the allowed parameter space $\Gamma_q^a$ defined in section \ref{Sec-uni-and-radial-sepa} satisfies $\{(0,1), \, (1,0)\} \subset \Gamma_q^a \subset \{(0,1), \, (1,0)\} \cup \{(a,b)| |a|= |b|\}$. The corresponding eigenfunctions of the pairs $(0,1)$ and $(1,0)$ are unique, of which the eigenvalues are both equal to $E=E_0+\epsilon^2(I_{one}+O(\epsilon^2))$. Moreover, if the nonlinear term $V_{NL}$ can be further expanded as $V_{NL}=V_L(\textbf{x})+K(\textbf{x})|\phi|^2+M(\textbf{x})|\phi|^4+O(|\phi|^6)$ and below the imaginary part of the complex interaction term is nonzero:
\begin{equation}\label{T1Q-6points-condition}
  I_{c-int} = \left[ 3T_2+\int_{\mathbb{R}^2/\Gamma} M(\overline{\phi_0^a}\phi_0^b)^3 d^2\textbf{x}\right] \neq 0,
\end{equation}
where $T_{2}$ represents
\begin{equation}\label{T1Q-Tree2}
  T_2=-\sum_{n \in i_{1}} \frac{1}{E_0^n-E_0} \int_{\mathbb{R}^2/\Gamma} K \phi_0^b \overline{\phi_0^a}^2 \phi_0^n  d^2\textbf{x} \int_{\mathbb{R}^2} K \overline{\phi_0^a}{\phi_0^b}^2 \overline{\phi_0^n} d^2\textbf{x}.
\end{equation}
then
\begin{eqnarray}
 \nonumber \Gamma_q^a &=& \{(0,1), \, (1,0)\} \cup \left\{ \left(\frac{1}{\sqrt{2}}, \frac{e^{-i\theta_1}}{\sqrt{2}}\right), \, \left(\frac{1}{\sqrt{2}}, \frac{ e^{-i\theta_1+2\pi i /3}}{\sqrt{2}}\right), \, \left(\frac{1}{\sqrt{2}}, \frac{e^{-i\theta_1+4\pi i/3}}{\sqrt{2}}\right) \right\} \\
   && \cup \left\{ \left(\frac{1}{\sqrt{2}}, -\frac{e^{-i\theta_2}}{\sqrt{2}}\right), \, \left(\frac{1}{\sqrt{2}}, -\frac{e^{-i\theta_2+2\pi i /3}}{\sqrt{2}}\right), \, \left(\frac{1}{\sqrt{2}}, -\frac{e^{-i\theta_2+4\pi i/3}}{\sqrt{2}}\right) \right\},
\end{eqnarray}
where $\theta_1,\theta_2 = \arg(I_{c-int})/3+ O(\epsilon^2)$. For each of the eight pairs, there is a unique eigenfunction in $H^2(\mathbb{R}^2/\Gamma)$ and the eigenvalues of the elements in each of the three set are equal. The eigenvalue of the second set is $E=E_0+\epsilon^2(I_{int}+I_{one}/2+O(\epsilon^2))$ and the eigenvalue of the third set has also the same expansion to the order of $O(\epsilon^2)$. All the above eigenfunctions are $C^{\infty}$ functions.
\end{theorem}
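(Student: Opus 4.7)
The plan is to combine the necessary conditions of Proposition \ref{Prop-1LS}, the radial separability of Proposition \ref{prop-rad-separability}, and the uniqueness of Proposition \ref{Prop-uniqueness} with a contraction/bootstrap argument based on Eq.~(\ref{P1-solution}); the difficult part will be a second-order perturbative analysis that resolves the equatorial circle into six discrete angles.

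First I would establish the inclusion $\Gamma_q^a\subset\{(0,1),(1,0)\}\cup\{|a|=|b|\}$. Proposition \ref{Prop-1LS} forces every allowed $(a,b)$ to satisfy either smallness in one coordinate or $\bigl||a|^2-|b|^2\bigr|<\delta$ with $\delta=O(\epsilon^{1/3})$; the second clause of Proposition \ref{prop-rad-separability} then isolates $(0,1)$ and $(1,0)$ as the sole polar candidates, while the first clause combined with the $U(1)$-equivalence built into $\Sigma_q$ collapses the near-equator strip onto the equatorial circle itself.

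To produce the eigenfunction at $(a,b)=(1,0)$ (the case $(0,1)$ follows by the involution $\phi(\textbf{x})\mapsto\overline{\phi(-\textbf{x})}$), I set up the iteration determined by Eq.~(\ref{P1-solution}) with $(a,b)=(1,0)$ and $E_1$ read from Eq.~(\ref{P1-rev-condition-1}); the companion consistency (\ref{P1-rev-condition-2}) holds automatically at this pair by the vanishing identity (\ref{vanishing-honeycomb}) applied to each iterate. The $O(\epsilon^2)$ bound on $v(|\phi_t|^2)$ together with Lemma \ref{Lemma1-Lpcont} makes the map a contraction in a ball of radius $O(\epsilon^3)$ in $M_\perp H^2$, producing a unique fixed point; Lemma \ref{lemma-HSregularity} upgrades it to $C^\infty$, and (\ref{P1-rev-condition-1}) yields $E=E_0+\epsilon^2 I_{one}+O(\epsilon^4)$.

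For the equator I parametrize $(a,b)=(1,e^{i\theta})/\sqrt{2}$ in $\Sigma_q$. The leading consistency (\ref{P1-consistency-2}) is automatic there, so I would expand $J(\epsilon)$ one further order using the explicit second iteration of (\ref{P1-solution}) and the expansion $V_{NL}=V_L+K|\phi|^2+M|\phi|^4+O(|\phi|^6)$. Two mechanisms contribute at order $\epsilon^4$: the nested operator $M_\perp\bigl[K|\phi|^2\,L^{-1}M_\perp[K|\phi|^2\cdot]\bigr]$ evaluated on the leading profile, whose projection onto $\phi_0^a,\phi_0^b$ reduces by the $\widetilde{R}$-selection rules to the sum $T_2$ in (\ref{T1Q-Tree2}); and the direct quartic term $M|\phi|^4$, which contributes $\int M(\overline{\phi_0^a}\phi_0^b)^3\,d^2\textbf{x}$. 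The vanishing identity (\ref{vanishing-honeycomb}) together with the decomposition $L^2_{\textbf{K}}=L^2_{\textbf{K},1}\oplus L^2_{\textbf{K},\omega}\oplus L^2_{\textbf{K},\overline\omega}$ kills every other cubic combination and reduces the equatorial consistency relation to $\Im(e^{-3i\theta}I_{c-int})=0$, whose six roots on $[0,2\pi)$ split into the two triples $\theta_1+2\pi k/3$ and $\theta_2+2\pi k/3$ in the statement. Existence at each such pair follows from running the same contraction argument with the new $(a,b)$; uniqueness in $H^2$ is Proposition \ref{Prop-uniqueness}; the common eigenvalue within each triple is forced by the $\widetilde{R}$-equivariance of the entire construction. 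The main obstacle, as anticipated, is the bookkeeping behind the first mechanism: showing that once every $\epsilon^4$ contribution to $J(\epsilon)$ has been tallied, the symmetry-based cancellations collapse the whole expression onto the single complex number $I_{c-int}$.
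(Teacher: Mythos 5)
Your treatment of the polar pairs $(1,0)$ and $(0,1)$ follows the paper's bootstrap essentially verbatim: the rotation identity $\widetilde{R}\widetilde{\phi}_1=\omega\widetilde{\phi}_1$ kills the second consistency condition, the outer iteration contracts in $H^s$, and the inversion--conjugation involution transports the result to $(0,1)$. The gap is in the equatorial part. At a root $\theta_*$ of the leading-order relation $\Im(e^{3i\beta}I_{c-int})=0$ the second consistency condition Eq.~(\ref{P1-rev-condition-2}) is \emph{not} automatically satisfied --- unlike at the poles there is no symmetry cancellation, only the approximate reality of $E_1$ --- and it generically fails by $O(\epsilon^6)$. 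So ``running the same contraction argument with the new $(a,b)$'' does not close: at each step the pair of projections Eqs.~(\ref{P1-rev-condition-1})--(\ref{P1-rev-condition-2}) is an overdetermined system for the single complex unknown $E_1$, and the fixed point you need lives not at $\theta_*$ but at an angle displaced by $O(\epsilon^2)$ that your scheme gives you no handle on (the theorem itself only asserts $\theta_{1,2}=\arg(I_{c-int})/3+O(\epsilon^2)$). The paper's device for this is the \emph{pseudo eigenfunction}: it relaxes the problem by keeping only $\Re E_1$ in the resolvent (Eqs.~(\ref{T1-pseudo-consistency1})--(\ref{T1-pseudo-consistency2})), runs the bootstrap for \emph{every} $\beta$ on the equator to obtain a continuous family $(\phi(\beta),E'(\beta))$ with possibly complex $E'$, and then locates the true eigenfunctions as the zeros of $\beta\mapsto\Im E'(\beta)$ via the intermediate value theorem. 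Your proposal contains no substitute for this step, and the $\epsilon^4$ computation of $I_{c-int}$ (which you do describe correctly, including the $T_2$ term from the nested resolvent and the $\widetilde{R}$-selection rules) only identifies where the zeros approximately are, not that eigenfunctions exist there.

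Two smaller omissions. First, Proposition~\ref{Prop-uniqueness} gives uniqueness of the eigenfunction attached to a \emph{given} allowed pair; it does not show there are exactly six equatorial pairs. The paper needs a separate angular-separation argument (the estimate $\Im[E'(\beta^{(1)})-E'(\beta^{(2)})]\sim O(|b^{(1)}-b^{(2)}|)$ followed by the computation of $\arg(E^{(1)}-E^{(2)})$ at the end of its proof) to exclude extra zeros clustering near the six. Second, the equivalence defining $\Sigma_q$ acts by a common scalar and preserves $(|a|,|b|)$, so it cannot ``collapse the near-equator strip onto the equatorial circle''; confining allowed pairs to $|a|=|b|$ exactly, rather than to the $\delta(\epsilon)$-strip supplied by Proposition~\ref{Prop-1LS}, is precisely what case (1) of Proposition~\ref{prop-rad-separability} is for, and it should be invoked there rather than for a task it does not perform.
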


\begin{proof}
Based on the aforementioned proposition, we firstly prove the existence of eigenfunctions for the allowed pairs $(0,1)$ and $(1,0)$. Without loss of generality, we only prove the case of $(1,0)$. To begin with, we are trying to find the eigenfunction of Eq. (\ref{LS-series-k}) with $\phi_t=\epsilon \phi_0^a$. As in Proposition. \ref{Prop-1LS}, we can easily get the counterpart in $M_\perp L^2$ space when $\epsilon$ is sufficiently small. We can even acquire the more exact estimation of the operator in Eq. (\ref{P1-solution}). As a result of Lemma \ref{lemma-HSregularity}, we find the operator in Eq. (\ref{P1-solution}) is indeed a contracting mapping in any $H^s$ if $\epsilon$ is supposed to be small enough for each space. We apply the transformed rotation operator $\widetilde{R}$ defined in section \ref{sec-prop-linear} to both sides of Eq. (\ref{P1-solution}). Recall the definition of $\widetilde{R}$, it is readily to find $\widetilde{R} \Delta_{\textbf{K}} \psi = \Delta_{\textbf{K}} \widetilde{R} \psi$. Furthermore, $\widetilde{R}$ is also commutative with $M_\perp$. Then if $|\phi_t|^2$ is invariant under the transformed rotation, $\widetilde{R} \widetilde{\phi}_1 = P(\widetilde{R} \epsilon \phi_0^a)=\omega P\epsilon \phi_0^a=\omega \widetilde{\phi}_1$. This result leads to an miraculous cancellation of Eq. (\ref{P1-rev-condition-2}): the integrations involving $\overline{\phi_0^b}\phi_0^a$ and $\overline{\phi_0^b}\widetilde{\phi}_1$ vanish, so that the second consistency condition is automatically satisfied.

Before going to the bootstrap procedure, we still need to prove the existence of $E_1$ that solve the eigenvalue problem of $H_t$ with $\phi_t=\epsilon \phi_0^a$. Substitute $E_1=(1+ \mu)\epsilon^2 I_{one}$ into Eq. (\ref{P1-rev-condition-1}) and rewrite it as
\begin{equation}\label{T1-contraction}
  \mu = f_{2,1}(\epsilon,\mu),
\end{equation}
where $f_{2,1}$ means $f_{2,1} \sim O (\epsilon^2)$ and $f_{2,1}$ is an Lipschitz function of $\mu$. For $\epsilon$ sufficiently small, $f_{2,1}$ can be defined when $\mu < C$ since we only need to ensure the existence of $P$ in Eq. (\ref{P1-solution}). Suppose $|f_{2,1}|<C|\epsilon|^2$. Since $f_{2,1}(\epsilon,C|\epsilon|^2)<C|\epsilon|^2$, by the Brouwer fixed point theorem, there must exist $\mu_0(\epsilon)\in \mathbb{C}$ such that $|\mu_0(\epsilon)|<C|\epsilon|^2$ and $f_{2,1}(\epsilon,\mu_0(\epsilon))=\mu_0(\epsilon)$. By Proposition \ref{Prop-uniqueness}, $\mu$ and thus the eigenfunction and its corresponding eigenvalue are unique for $E_1$ and $\epsilon$ small enough. Since $H_t$ is a self-adjoint operator, the eigenvalue is also real.

After the construction of the eigenfunction of $H_t$ for $\phi_t=\epsilon \phi_0^a$, next we continue to find an eigenfunction of $H_t$ for $\phi_t=\phi_1=\epsilon \phi_0^a +\widetilde{\phi}_1$. Under a similar procedure, denote the orthogonal counterpart as $\widetilde{\phi}_2$ and the correction of the eigenvalue compared to $E_0$ as $E_2$. For the same reason, we can always find the cancellation of Eq. (\ref{P1-rev-condition-2}) and the existence of $E_2$. The iteration can go on for any $n \in Z^{+}$ and there is a series of $\{(\phi_n,E_n)\}$. To complete the first part of the theorem, we need to prove the convergence of both the two series of $E_n$ and $\phi_n$ through the bootstrap procedure and examine the limit of them turn out to be consistent with Eq. (\ref{NLS-k}).

Similar to the arguments in Lemma \ref{Lemma1-Lpcont}, we estimate $\widetilde{\phi}_{n+1}-\widetilde{\phi}_n$ by Eq. (\ref{P1-2Eqs-1}) and obtain
\begin{eqnarray}
  \nonumber  L\left(\widetilde{\phi}_{n+1}-\widetilde{\phi}_n\right)&=& E_{n+1}\left(\widetilde{\phi}_{n+1}-\widetilde{\phi}_n\right)-M_{\perp}\left[v(|\phi_n|^2)\left(\widetilde{\phi}_{n+1}-\widetilde{\phi}_n\right)\right] \\
   \nonumber &-& M_\perp[ D(|\phi_n|^2,|\phi_{n-1}|^2)(|\phi_n|^2-|\phi_{n-1}|^2)\phi_n] \\
  &+&  (E_{n+1}-E_n)\widetilde{\phi}_n.
\end{eqnarray}
It implies $\|\phi_{n+1}-\phi_n\|_{H^s} \le C\epsilon^2(\|\phi_n -\phi_{n-1}\|_{H^s}+\epsilon |E_{n+1}-E_n|)$. Turning to the series of $E_n$, the estimation of $E_{n+1}-E_{n}$ can be derived by Eq. (\ref{P1-rev-condition-1}) as
\begin{eqnarray}
 \nonumber E_{n+1}-E_n &=&\frac{1}{\epsilon} \int_{\mathbb{R}^2/\Gamma} \left\{v(\phi^{(1)})\left[  (\widetilde{\phi}_{n+1}-\widetilde{\phi}_n)\overline{\phi_0^a} \right] \right.\\
   &+& \left. D(|\phi_n|^2,|\phi_{n-1}|^2)(|\phi_n|^2-|\phi_{n-1}|^2)\overline{\phi_0^a}\phi_{n} \right\} d^2\textbf{x}.
\end{eqnarray}
Therefore $|E_{n+1}-E_n|\le C\epsilon^2 (\|\phi_n-\phi_{n-1}\|_{H^s}+\|\phi_{n+1}-\phi_{n}\|_{H^s})$. Eventually, $\|\phi_{n+1}-\phi_n\|_{H^s} \le C\epsilon^2\|\phi_n-\phi_{n-1}\|_{H^s}$ and $|E_{n+1}-E_n| \le C\epsilon^2\|\phi_n-\phi_{n-1}\|_{H^s}$, showing the convergence of $\{\phi_n\}$ and $E_n$. Denote the limits of these two series are $\phi$ and $E'$ respectively, then
\begin{equation}
 \|H(\textbf{K})\phi-(E_0+E')\phi\|_{H^s} \le C(\|\phi-\phi_n \|_{H^s}+| E'-E_n|),
\end{equation}
the right hand side of which goes to zero as $n\rightarrow \infty$. As a result, $\phi \in H^s(\mathbb{R}^2/\Gamma)$ is the eigenfunction wanted. Since $\Delta_{\textbf{K}} \overline{\phi(-\textbf{x})}= \overline{(\Delta_{\textbf{K}} \cdot \phi)(-\textbf{x})}$, $\overline{\phi(-\textbf{x})}$ is also an eigenfunction of $H_\textbf{K}$ with the same eigenvalue and its projection in $M_{\|}$ is represented by (0,1). The proof of the first case is complete.

Now we consider the case $|a|=|b|$. Given the equivalence relation of the parameter space, assume $|a|=1/\sqrt{2}$ and $|b|=e^{i\beta/\sqrt{2}}$ without loss of generality. Following the similar steps in the first case, we can set $\phi_t=\epsilon (\phi_0^a + e^{i\beta} \phi_0^b)/\sqrt{2}$ and obtain the corresponding $\widetilde{\phi}_1$. Apply the same operator mentioned above, i. e. $\overline{\widetilde{\phi}_1(-\textbf{x})}= (\epsilon/\sqrt{2}) P\overline{(\phi_0^a+e^{i\beta}\phi_0^b)(-\textbf{x})}= (\epsilon/\sqrt{2}) e^{-i\beta}P(\phi_0^a+e^{i\beta}\phi_0^b)=e^{-i\beta}\widetilde{\phi}_1$. Similarly, this identity is also available to any $\phi_t$ such that $\overline{\phi_t(-\textbf{x})}=e^{-i\beta}\phi_t$ if they exist. To continue the same bootstrap procedure, we need to examine the consistency of Eqs. (\ref{P1-rev-condition-1}) and (\ref{P1-rev-condition-2}). Apply the inversion-conjugation operation to both sides of Eq. (\ref{P1-rev-condition-1}) and multiply it by $e^{i\beta}$, we have a similar equation compared to Eq. (\ref{P1-rev-condition-2}):
\begin{equation}
  \int_{\mathbb{R}^2/\Gamma} v(|\phi_t|^2)[e^{i\beta}|\phi_0^b|^2+\overline{\phi_0^b} \phi_0^a +\overline{\phi_0^b} \widetilde{\phi}_1] d^2\textbf{x} = \overline{E_1}e^{i\beta} \phi_0^b.
\end{equation}
For real $E_1$, it implies the automatic consistency of Eq. (\ref{P1-rev-condition-2}). However, in generic cases, $E_1$ can be complex. Although there is no such automatic consistency generally, we can still define a pseudo eigenfunction and its pseudo eigenvalue by simultaneously satisfying the revised Eq. (\ref{P1-solution}) and Eq. (\ref{P1-rev-condition-1}):
\begin{eqnarray}
 \label{T1-pseudo-consistency1} \widetilde{\phi}_1 & = & -\frac{1}{\sqrt{2}}(1+ L^{-1}M_{\perp}[v(|\phi_t|^2)\cdot (\cdot)]-\Re[E_1]L^{-1})^{-1} L^{-1}M_{\perp} [v(|\phi_t|^2)\epsilon(\phi_0^a+e^{i\beta}\phi_0^b)], \\
 \label{T1-pseudo-consistency2}E_1 &=& \int_{\mathbb{R}^2/\Gamma} v(|\phi_t|^2)\left[|\phi_0^a|^2+e^{i\beta}\overline{\phi_0^a} \phi_0^b +\sqrt{2}\frac{\overline{\phi_0^a} \widetilde{\phi}_1}{\epsilon}\right] d^2\textbf{x}.
\end{eqnarray}
From the definition of the solution of the pseudo eigenvalue problem, now the existence of $E_1$ can be proved again from Brouwer fixed point theorem. Actually, suppose $E_1=\epsilon (1+\mu) (I_{one}/2 +I_{int})$. We still have $\mu =f_{2,1}(\epsilon,\mu)$ and $|f_{2,1}| <C\epsilon^2$. For $f_{2,1}$ a Lipschitz function of $\mu$, there exists $\mu_0(\epsilon)$ such that both Eqs. (\ref{T1-pseudo-consistency1}) and (\ref{T1-pseudo-consistency2}) are satisfied by Brouwer fixed point theorem like Eq. (\ref{T1-contraction}). The uniqueness of $E_1$ should be reconsidered. In this situation, suppose they are two pairs $(E_1^{(1)},\phi_1^{(1)})$ and $(E_1^{(2)},\phi_1^{(2)})$. Then
\begin{equation}
  L (\phi_1^{(1)}-\phi_1^{(2)}) + v(|\phi_t|^2)(\phi_1^{(1)}-\phi_1^{(2)}) = \Re(E_1^{(1)}-E_1^{(2)})\phi_1^{(1)} + \Re(E_1^{(2)})(\phi_1^{(1)}-\phi_1^{(2)}).
\end{equation}
Since $|\Re(E_1^{(1)}-E_1^{(2)})|\le |E^{(1)}-E^{(2)}|$, following the same argument in Lemma \ref{Lemma1-Lpcont} can finally prove the uniqueness of $\phi_1$ as a pseudo eigenfunction and $E_1$ as a pseudo eigenvalue for sufficiently small $\epsilon$.

As the bootstrap procedure goes on, we also acquire two series of the eigenpairs $\{\phi_n \}$ and $E_n$ for each $\beta$. Similarly, the limits exist and denote them as  $\phi(\beta)$ and $E'(\beta)$. By the same argument above we can reconstruct Lemma \ref{Lemma1-Lpcont} and Proposition \ref{Prop-uniqueness} even for the pseudo eigenvalue problem, in other words the eigenpairs satisfying Eqs. (\ref{T1-pseudo-consistency1}) and $(\ref{T1-pseudo-consistency2})$ for $\widetilde{\phi}_1$, $\phi_t$ and $E_1$ replaced by $\widetilde{\phi}$, $\phi$ and $E'$. For Proposition \ref{prop-rad-separability}, $E_1^{(1)}$ and $E_1^{(2)}$ of Eq. (\ref{P3-coreidentity2}) should be replaced by $\overline{E_1^{(1)}}$ and $\overline{E_1^{(2)}}$. So at this time we should take the conjugation of Eq. (\ref{P3-coreidentity2}) and calculate the difference of Eqs. (\ref{P3-coreidentity1}) and (\ref{P3-coreidentity2}), the result showing a similar contradiction like in Proposition \ref{prop-rad-separability}.

The remaining work is to figure out the true eigenfunctions from the pseudo ones, meaning the imaginary part of the eigenvalue $\Im(E')=0$. Intuitively, the leading order of the left hand side of Eq. (\ref{T1-pseudo-consistency2}) that has complex contribution may determine the distribution of the allowed pairs in $\Sigma_q^a$ and their corresponding eigenfunctions in the strict meaning. For the order $O(\epsilon^2)$, we have calculate them as $\epsilon^2(I_{one}/2+I_{int})$, which is real and therefore has no effects on the location of the allowed parameter pairs. Before calculate the terms of order $O(\epsilon^4)$, we need to calculate the leading term of $\widetilde{\phi}$. As mentioned before, $\widetilde{\phi}(\beta) \sim O(\epsilon^3)$, so we can rewrite Eq. (\ref{T1-pseudo-consistency1}) as
\begin{equation}
  \widetilde{\phi}(\beta) = -\epsilon^3 L^{-1}M_\perp \left[|\phi_0^a+e^{i\beta}\phi_0^b|^2(\phi_0^a+e^{i\beta}\phi_0^b)\right] + O(\epsilon^5).
\end{equation}
As it states in the beginning of this section, we can regard $M_\perp L^2$ as the closure of the linear span of the basis $\{\phi_0^i\}$. For any $(\phi_0^i, E_0^i)$, we have $L^{-1}\phi_0^i = \phi_0^i/(E_0^i-E_0)$ by definition. Therefore, we can rewrite $\widetilde{\phi}(\beta)$ as
\begin{equation}\label{T1-solution-order3}
  \widetilde{\phi}(\beta) = -\epsilon^3\left[ \sum_{n=1}^{\infty} \frac{\phi_0^n}{E_0^n-E_0} \int_{\mathbb{R}^2/\Gamma} |\phi_0^a+e^{i\beta}\phi_0^b|^2(\phi_0^a+e^{i\beta}\phi_0^b)\overline{\phi_0^n} d^2\textbf{x} \right]+O(\epsilon^5)
\end{equation}
Substituting Eq. (\ref{T1-solution-order3}) into Eq. (\ref{T1-pseudo-consistency2}), The consistency condition for the order $O(\epsilon^4)$ is
\begin{eqnarray}
 \nonumber  && \epsilon^4 \Im \left[ \int_{\mathbb{R}^2/\Gamma}M|\phi_0^a+e^{i\beta}\phi_0^b|^4 e^{i\beta}\overline{\phi_0^a}\phi_0^b  d^2\textbf{x} \right.\\
 \nonumber  &+& \left. \int_{\mathbb{R}^2/\Gamma} \left(2K|\phi_0^a+e^{i\beta}\phi_0^b|^2 \overline{\phi_0^a}\widetilde{\phi}+ K(\phi_0^a+e^{i\beta}\phi_0^b)^2\overline{\widetilde{\phi}}\overline{\phi_0^a}\right)d^2\textbf{x} \right] \\
 \nonumber  &=& \epsilon^4 \Im \left\{ \int_{\mathbb{R}^2/\Gamma}M|\phi_0^a+e^{i\beta}\phi_0^b|^4 e^{i\beta}\overline{\phi_0^a}\phi_0^b  d^2\textbf{x} \right. \\
 \nonumber  &-&   \sum_{n=1}^{\infty} \left[ 2\int_{\mathbb{R}^2/\Gamma} K|\phi_0^a+e^{i\beta}\phi_0^b|^2\overline{\phi_0^a}\phi_0^n d^2\textbf{x} \int_{\mathbb{R}^2/\Gamma} K|\phi_0^a+e^{i\beta}\phi_0^b|^2(\phi_0^a+e^{i\beta}\phi_0^b)\overline{\phi_0^n} d^2\textbf{x}\right. \\
            &+& \left.\left. \int_{\mathbb{R}^2/\Gamma} K|\phi_0^a+e^{i\beta}\phi_0^b|^2(\overline{\phi_0^a}+e^{-i\beta}\overline{\phi_0^b})\phi_0^n d^2\textbf{x} \int_{\mathbb{R}^2/\Gamma} K(\phi_0^a+e^{i\beta}\phi_0^b)^2\overline{\phi_0^a}\overline{\phi_0^n} d^2\textbf{x} \right]\right\}. \label{T1-Energy-order4}
\end{eqnarray}
where $v$ is supposed to be expanded as $v=K(\textbf{x})|\phi|^2+M(\textbf{x})|\phi|^4+O(|\phi|^6)$. To simplify Eq. (\ref{T1-Energy-order4}), we classify the summand in the summation into three categories: $n \in i_{1}$, $n \in i_{\omega}$ and $n \in i_{\overline{\omega}}$, which are defined in the beginning of this section. For $n \in i_{\overline{\omega}}$, since the necessary condition that the integration is nonzero is the integrand remains unchanged under the action of the transformed rotation operator $\widetilde{R}$, the summation of these terms turn out to be real:
\begin{eqnarray}
\nonumber  S_{\overline{\omega}} &=& -\Im \left[ \int 2K\phi_0^n\overline{\phi_0^b}|\phi_0^a|^2 \int K\left(2\phi_0^b\overline{\phi_0^n}|\phi_0^a|^2+\phi_0^b\overline{\phi_0^n}|\phi_0^b|^2\right) \right. \\
   &+& \left. \int K\left(2\phi_0^n\overline{\phi_0^b}|\phi_0^a|^2+\phi_0^n\overline{\phi_0^b}|\phi_0^b|^2\right) \int 2K\phi_0^b\overline{\phi_0^n}|\phi_0^a|^2 \right]=0.
\end{eqnarray}
The same for $n \in i_{\omega}$. Things are different when $n \in i_{1}$. In this case, the imaginary part of the summation can depend on $\beta$:
\begin{eqnarray}
 \nonumber  S_{1} &=& -\Im \left[ \int 2K\overline{\phi_0^a}^2\phi_0^b \phi_0^n \int K\left({\phi_0^a}^2\overline{\phi_0^b}\overline{\phi_0^n}+\overline{\phi_0^a}{\phi_0^b}^2\overline{\phi_0^n}e^{3i\beta}\right)\right.  \\
 \nonumber  &+& \left. \int K\overline{\phi_0^a}{\phi_0^b}^2\overline{\phi_0^n} \int K\left(\phi_0^a\overline{\phi_0^b}^2\phi_0^n + \overline{\phi_0^a}^2\phi_0^b\phi_0^ne^{3i\beta}\right) \right] \\
            &=& -3 \Im\left[e^{3i\beta} \int K\overline{\phi_0^a}^2\phi_0^b\phi_0^n \int K\overline{\phi_0^a} {\phi_0^b}^2 \phi_0^n \right].
\end{eqnarray}
For the first integration of Eq. (\ref{T1-Energy-order4}), following a similar simplification can result in the form of $\Im[ e^{3i\theta}\int  M\overline{\phi_0^a}^3{\phi_0^b}^3]$. Therefore, Eq. (\ref{T1-Energy-order4}) can be rewritten as
\begin{equation}\label{T1-new-consistency}
  \Im E'(\beta) = \Im[\epsilon^4e^{3i\beta}I_{c-int}+O(\epsilon^6)].
\end{equation}
where $I_{c-int}$ is given in Theorem \ref{theorem-main}. For $\beta$ close to $-\theta$ defined in the theorem and sufficiently small $\epsilon$, suppose $\beta=\gamma-\theta$ it is obvious that $\Im E'(\gamma)>0$ for $\pi/3-C\epsilon^2 > \gamma>C\epsilon^2$ and $\Im E'(\gamma)<0$ for $-\pi/3+C\epsilon^2 < \gamma < -C\epsilon^2$. For the zero point theorem, there exist $\beta$ such that $\Im E'=0$. For all the $\beta \in [-\pi,\pi)$, we can conclude that there are at least six points such that let $\Im E'(\beta)=0$, the locations of which are just as described in Theorem \ref{theorem-main}. Furthermore, Applying the transformed rotation operator $\widetilde{R}$ to the eigenfunction $\phi$, we can prove $\widetilde{R}\phi$ and $\widetilde{R}^2\phi$, with $\beta$ transformed to $\beta+2\pi/3$ and $\beta+4\pi/3$, are also the eigenfunctions of $H(\textbf{K})$ of the same eigenvalue because of the commutivity of $\widetilde{R}$ and $H(\textbf{K})$.

The remaining work is the uniqueness of these six allowed pairs. Suppose there are two distinct allowed pairs $(1/\sqrt{2},e^{i\beta^{(1)}}/\sqrt{2})$ and $(1/\sqrt{2},e^{i\beta^{(2)}}/\sqrt{2})$, whose eigenvalues are $E'^{(1)}$ and $E'^{(2)}$, respectively. By Eq. (\ref{L1-eigenvalue-comp}), we find $\Im [E'(\beta^{(1)})-E'(\beta^{(2)})] \sim O (|b^{(1)}-b^{(2)}|)$. Therefore, like Proposition \ref{prop-rad-separability}, we have
\begin{equation}
  \Im(E^{(1)}-E^{(2)}) = \Im\left[(e^{3i\beta^{(1)}}-e^{3i\beta^{(2)}})\left(\epsilon^4 I_{c-int}+\frac{O(\epsilon^6)}{e^{2i\beta^{(1)}}+e^{2i\beta^{(2)}}+e^{i\beta^{(1)}+i\beta^{(2)}}}\right)\right]=0.
\end{equation}
Then suppose one of $\beta^{(i)}$ is not one of the given six points, so for sufficiently small $\epsilon$, we can suppose the two allowed pairs are close enough, like $|\beta^{(1)}-\beta^{(2)}|<\pi/6$, say. Thus the absent value of the denominator of the second term is $|1+e^{i(\beta^{(1)}-\beta^{(2)})}+e^{2i(\beta^{(1)}-\beta^{(2)})}|>C$. For $\epsilon$ sufficiently small,
\begin{eqnarray}
 \nonumber \frac{\arg(E^{(1)}-E^{(2)})}{\epsilon^4} &=& \arg(I_{c-int})+\arg(e^{3i\beta^{(1)}}) +\arg(1-e^{3i(\beta^{(1)}-\beta^{(2)})}) +O(\epsilon^2) \\
   &=& \arg(1-e^{3i(\beta^{(1)}-\beta^{(2)})}) +O(\epsilon^2) \neq 0,
\end{eqnarray}
contradicting to the assumption and we therefore complete the second part of the proof. For the $C^\infty$ continuity of the eigenfunctions, see the appendix.
\end{proof}

\acknowledgments
R. P., Q.F. and F.Y. acknowledge support from NSFC (No.91950120,11690033), Natural Science Foundation of Shanghai  (No.19ZR1424400), and Shanghai Outstanding Academic Leaders Plan (No. 20XD1402000).

\appendix
\section{Analytical analysis of regularity and $C^\infty$ continuity of the eigenfunctions}\label{sec-appen-sobolev}
In the appendix, we add the necessary analysis for the regularity problems occurred in the main text. The primary question is $-(1+ L^{-1}M_{\perp}[v(|\phi_t|^2)\cdot (\cdot)]-E_1L^{-1})^{-1} L^{-1}M_{\perp} [v(|\phi_t|^2)\cdot (\cdot)]$ in Eq. (\ref{P1-solution}) can be defined as a mapping between which spaces. The main difficulties are at the nonlinear term, which needs some techniques to handle the estimation of the norm. As well-known in PDE theory, the elliptic regularities can lift the $L\phi \in H^s$ into $\phi \in H^{s+2}$ for any $s \in Z^{+}\cup \{0\}$. If the operator $P$ can also improve the regularity as expected, one can eventually prove the $C^\infty$ continuity of the eigenfunctions by Sobolev embedding theorem. Indeed, we actually can prove the following lemma for any function $\phi \in H^2$:

\begin{lemma}\label{lemma-HSregularity}
Suppose for sufficiently small $\epsilon>0$, the norm of $\phi_t^{\epsilon}$, which are defined as $H^s(\mathbb{R}^2/\Gamma)$ functions with index $\epsilon$, satisfies $\|\phi_t^\epsilon \|_{H^s} \le C(s)\epsilon$ and $|E_1(\epsilon)| \le C\epsilon$. Then for any $s \in Z^{+}$ ($s \ge 2$), there exist $\epsilon(s)$ such that for any $\epsilon < \epsilon (s)$, the operator $P:H^s \rightarrow H^{s+1}$ such that $P\psi =-(1+ L^{-1}M_{\perp}[v(|\phi_t|^2)\cdot (\cdot)]-E_1L^{-1})^{-1} L^{-1}M_{\perp} [v(|\phi_t|^2)\cdot (\cdot)]$ has the norm of $O(\epsilon^2)$.
\end{lemma}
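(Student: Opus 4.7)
The plan is to combine three standard ingredients: the algebra property of $H^s(\mathbb{R}^2/\Gamma)$ for $s\geq 2$, elliptic regularity for the operator $L=-\Delta_\textbf{K}+V_L-E_0$ on the two-dimensional torus, and a Neumann-series inversion of the bracketed operator $I+L^{-1}M_\perp[v(|\phi_t|^2)\cdot(\cdot)]-E_1 L^{-1}$. The final factor of $L^{-1}M_\perp[v(|\phi_t|^2)\cdot(\cdot)]$ in $P$ will carry both the decisive $O(\epsilon^2)$ smallness and the regularity gain, while the other two pieces produce only mild, controllable perturbations.

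First, I would establish a Moser/Kato--Ponce-type composition estimate for $v(|\phi_t^\epsilon|^2)$ in $H^s$. Since $v$ is $C^\infty$ in its arguments and vanishes to second order in $\phi$, i.e.\ $v=K(\textbf{x})|\phi|^2+O(|\phi|^4)$, and since the Sobolev embedding $H^s(\mathbb{R}^2/\Gamma)\hookrightarrow L^\infty$ holds for $s\geq 2$ in dimension two, the tame composition estimate gives
\begin{equation}
  \|v(|\phi_t^\epsilon|^2)\|_{H^s}\;\leq\;C(s,\|\phi_t^\epsilon\|_{L^\infty})\,\|\phi_t^\epsilon\|_{H^s}^2\;\leq\;C(s)\,\epsilon^2,
\end{equation}
where the hypothesis $\|\phi_t^\epsilon\|_{H^s}\leq C(s)\epsilon$ has been used. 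By the algebra property of $H^s$, multiplication by $v(|\phi_t^\epsilon|^2)$ is then bounded on $H^s$ with operator norm $O(\epsilon^2)$.

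Next, I would invoke elliptic regularity. Because $L$ has smooth periodic coefficients on the compact manifold $\mathbb{R}^2/\Gamma$ and its $L^2$-kernel is spanned by $\phi_0^a,\phi_0^b$, the restriction of $L$ to the $\phi_0^a,\phi_0^b$-orthogonal complement is an isomorphism $M_\perp H^{s+2}\to M_\perp H^s$ for every $s\geq 0$. Thus $L^{-1}M_\perp\colon H^s\to H^{s+2}$ is bounded, and composing with the multiplication above produces an operator $H^s\to H^{s+2}$ of norm $O(\epsilon^2)$; likewise $E_1 L^{-1}\colon M_\perp H^s\to M_\perp H^{s+2}$ has norm $O(\epsilon)$ thanks to $|E_1|\leq C\epsilon$. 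Viewed inside $M_\perp H^s$ via the continuous inclusion $H^{s+2}\hookrightarrow H^{s+1}\hookrightarrow H^s$, both operators are contractions for $\epsilon$ sufficiently small, so a Neumann series inverts $I+L^{-1}M_\perp[v(|\phi_t|^2)\cdot(\cdot)]-E_1 L^{-1}$ on $M_\perp H^s$ with an inverse of norm $1+O(\epsilon)$. Composing on the right with $L^{-1}M_\perp[v(|\phi_t|^2)\cdot(\cdot)]$ then yields $P\colon H^s\to H^{s+1}$ (in fact $H^{s+2}$) with operator norm $O(\epsilon^2)$, as claimed.

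I expect the main obstacle to be precisely the composition estimate in the first step: $v$ is evaluated on a function of only finite Sobolev smoothness, and a naive Leibniz expansion produces top-order derivative terms that must each be controlled by a single factor of $\|\phi_t\|_{H^s}$ while the remaining lower-order factors are absorbed into the $L^\infty$-norm. This is exactly the content of the tame Moser estimate, and it forces the threshold $\epsilon(s)$ to depend on $s$ through the constants $C(s)$. A uniform threshold in $s$ is not expected and is not needed for the subsequent $C^\infty$-continuity conclusion of Theorem \ref{theorem-main}, which follows by iterating the lemma over all $s\geq 2$ and applying the Sobolev embedding $\bigcap_{s}H^s\hookrightarrow C^\infty$.
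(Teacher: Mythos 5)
Your proposal is correct and reaches the same conclusion, but it replaces the paper's central technical step with different machinery, so a comparison is worth making. Both arguments share the outer skeleton: elliptic invertibility of $L$ on $M_\perp L^2(\mathbb{R}^2/\Gamma)$ giving a bounded $L^{-1}M_\perp\colon H^{s'}\to H^{s'+2}$, a smallness estimate on the multiplier $v(|\phi_t|^2)$, and a Neumann-series inversion of the bracket. Where you diverge is in how the composition $v(|\phi_t^\epsilon|^2)$ is controlled. You invoke the tame Moser composition estimate together with the algebra property of $H^s(\mathbb{R}^2/\Gamma)$ for $s\ge 2$, obtaining $\|v(|\phi_t|^2)\|_{H^s}\le C(s)\epsilon^2$ and hence a multiplication operator bounded on $H^s$ itself with norm $O(\epsilon^2)$; this actually yields the slightly stronger mapping $P\colon H^s\to H^{s+2}$. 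The paper instead builds the estimate by hand: it embeds $H^s\subset C^{s-2,1-\delta}$ by Morrey, shows $v(|\phi_t|^2)\in W^{s-1,p}$ for every $p<\infty$ by approximating $\phi_t$ with smooth functions, identifying the formal Fa\`a di Bruno derivatives with weak derivatives via $L^p_{loc}$ convergence and dominated convergence, and then closes with H\"older's inequality to get $\|v(|\phi_t|^2)\psi\|_{H^{s-1}}\le C\epsilon^2\|\psi\|_{H^s}$ --- a one-derivative loss that the two derivatives gained from $L^{-1}$ more than compensate. Your route is shorter and leans on standard (but nontrivial) harmonic-analysis results; the paper's is longer but self-contained, requiring nothing beyond Sobolev embedding and approximation by smooth functions. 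Two minor points you should make explicit if you write this up: the Neumann inverse must be checked to preserve $H^{s+1}$ (e.g.\ via the identity $(I+A)^{-1}=I-A(I+A)^{-1}$, or by running the contraction argument at the higher regularity, which quietly uses that $\phi_t^\epsilon$ is bounded in every $H^{s}$), and the Moser estimate should be stated for the $\textbf{x}$-dependent nonlinearity $v(\textbf{x},w)$ with $v(\textbf{x},0)=0$, which is the form actually needed here. Neither is a gap; the paper glosses the first point as well.
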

\begin{proof}
The core step is the regularity of the operator with multiplier $v(|\phi_t|^2)$. According to the Morrey inequality, we have the embedding of $H^s$ into the Holder space $H^s \subset C^{s-2, 1-\delta}$ for any $0<s<1$. Since $v(x)$ is supposed as a $C^{\infty}$ function, $v(|\phi_t|^2)$ is an $C^{s-2}$ function and for any partial derivatives $|D^{i} v(|\phi_t|^2)| \le C\epsilon^2$ in terms of $\textbf{x}$.

We still need to prove it is actually a $H^{s-1}$ function. Recall that by the Sobolev inequality, any $\phi_t$ is also in the Sobolev space $W^{s-1,p}$ for any $1\le p <\infty$. We claim $v(|\phi_t|^2) \in W^{s-1,p}$ for any $p$ and will prove it by definition. We now see $\phi_t$ as defined in $H^s_{loc}(\mathbb{R}^2)$. For the denseness of $C^{\infty}$ functions in any local Sobolev space, we can choose a series $\Phi_j \in C^\infty \cap W^{s-1,p}$ that $\Phi_j \rightarrow \phi_t$. Then it follows that for the partial derivatives of any order $k \le s-1$ $D^{k} v(\textbf{x},\phi)$, where $v$ is considered as the function of $\textbf{x}= \{x,y\}$ and $|\phi|^2$, we have
\begin{equation}\label{LA-ine1}
  \int_{S} |D^{k} v(|\phi_t|^2) -D^{k} v(|\Phi_l|^2)|^p d^2\textbf{x} \le \left(\sup_{|\phi|<|\phi_t|_{L^\infty}} \partial_{\phi} D^{k}v \right)^p\int_S ||\phi_t|^2-|\Phi_l|^2|^p d^2\textbf{x} \rightarrow 0
\end{equation}
for any S a compact subset of $\mathbb{R}^2$. Then we conclude that $D^{k}v(|\Phi_j|^2) \rightarrow D^{k}v(|\phi_t|^2)$ in $L^p_{loc}$ for any $p$. According to the Riesz Theorem, there is an subsequence of $\Phi_j$ such that $\Phi_j \rightarrow \phi_t$ a. e. in $\mathbb{R}^2$. Then from the $C^\infty$ continuity of $D^k v$, $D^k v(|\Phi_l|^2) \rightarrow D^k v(|\phi_t|^2)$ a. e. in $\mathbb{R}^2$. For the formal partial derivatives of $v(|\phi_t|^2)$ in terms of $\textbf{x}$, we can rewrite it as
\begin{equation}\label{LA-def1}
  D^{i}_{fm}v(|\phi_t|^2) = \sum_{0\le k_1,k_2,m_j\le i}^{\sum m_j=i} C\left(m_j, k_1,k_2\right)v_{k_1,k_2}(\textbf{x},\phi_t) \prod_{j=1}^{k_2} D^{m_j} (|\phi_t|^2)
\end{equation}
where all the $D^{i}$ means the partial derivatives of variants $x$ and $y$ and $k_1,k_2$ represent the order of the partial derivative $D^k$ for $\textbf{x}$ and $\phi$, respectively. By the dominated convergence theorem, we can readily conclude that
\begin{eqnarray}
 && \nonumber \int_S |D^{i}v(|\Phi_j|^2)- D^{i}_{fm} v(|\phi_t|^2) |^p d^2\textbf{x} \le C\left(\sup_{|\phi|<|\phi_t|_{L^\infty},k} D^{k}v \right)^p \|\Phi_j-\phi_t \|_{W^{s-1,p}(S)} \\
   &+& C\sum_{0\le k, m_j\le i}^{\sum m_j=i} \int_S |D^k v(|\Phi_j|^2) - D^k v(|\phi_t|^2)|^p \prod_{j=1}^{k_2} D^{m_j} |\phi_t|^{2p} d^2\textbf{x} \rightarrow 0, \label{LA-ine2}
\end{eqnarray}
showing the $L^p$ convergence of $D^{i}v(|\Phi_j|^2) \rightarrow D^i_{fm} v(|\phi_t|^2)$. Recall the definition of the weak derivatives for any function in $H^{s-1,p}$. Then for any testing function $q \in C^\infty_0 (\mathbb{R}^2)$, the identities
\begin{equation}
  \int_S q D^{i}v(|\Phi_j|^2) d^2\textbf{x} = (-1)^{|i|} \int_S v(|\Phi_j|^2) D^{i}q d^2\textbf{x}
\end{equation}
have the limit
\begin{equation}
  \int_S q D^{i}_{fm}v(|\phi_t|^2) d^2\textbf{x} = (-1)^{|i|} \int_S v(|\phi_t|^2) D^{i}q d^2\textbf{x}
\end{equation}
by the convergence of $D^i v(|\Phi_j|^2)$ and $v(|\phi_t|^2)$ in $L^2_{loc}$. Therefore we confirm the claim.

Going through the procedure above for $v[\epsilon^2(|\phi_t|^2/\epsilon^2)]/\epsilon^2$, we have $\|v(|\phi_t|^2)\psi\|_{H^{s-1}} \le  C\epsilon^2\| \psi \|_{H^{s}}$, by choosing $p$ great enough and use the H\"older inequality. $M_\perp$ is a bounded operator for any $H^s$ space, and $L^{-1}$ is a bound mapping from $H^{s-1}$ to $H^{s+1}$ according to the elliptical regularity. Therefore, for $\epsilon$ sufficiently small, $P$ is also well-defined mapping between $H^s \rightarrow H^{s+1}$ and the norm is of the order $O(\epsilon^2)$.
\end{proof}

For the $C^\infty$ continuity of any eigenfunctions of the nonlinear Hamiltonian, one can just prove the similar lemma for $\widetilde{\phi}=L^{-1}M_\perp[E\widetilde{\phi}+v(|\phi|^2)\phi]$. Therefore finish the last part of Theorem \ref{theorem-main}.


\begin{thebibliography}{10}

\bibitem{Graphene-review:RevModPhys.81.109}
A.~H. Castro~Neto, F.~Guinea, N.~M.~R. Peres, K.~S. Novoselov, and A.~K. Geim.
\newblock The electronic properties of graphene.
\newblock {\em Rev. Mod. Phys.}, 81:109--162, Jan 2009.

\bibitem{honeycomb-exp-kekule-PhysRevLett}
Changhua Bao, Hongyun Zhang, Teng Zhang, Xi~Wu, Laipeng Luo, Shaohua Zhou, Qian
  Li, Yanhui Hou, Wei Yao, Liwei Liu, Pu~Yu, Jia Li, Wenhui Duan, Hong Yao,
  Yeliang Wang, and Shuyun Zhou.
\newblock Experimental evidence of chiral symmetry breaking in kekul\'e-ordered
  graphene.
\newblock {\em Phys. Rev. Lett.}, 126:206804, May 2021.

\bibitem{honey-NLS:EXP-Omri}
Omri Bahat-Treidel and Mordechai Segev.
\newblock Nonlinear wave dynamics in honeycomb lattices.
\newblock {\em Phys. Rev. A}, 84:021802, Aug 2011.

\bibitem{Honey-NLS:optical-edgewave-PRA}
Mark~J. Ablowitz, Christopher~W. Curtis, and Yi-Ping Ma.
\newblock Linear and nonlinear traveling edge waves in optical honeycomb
  lattices.
\newblock {\em Phys. Rev. A}, 90:023813, Aug 2014.

\bibitem{SSB:bifurcation-expnature-phon}
Philippe Hamel, Samir Haddadi, Fabrice Raineri, Paul Monnier, Gregoire
  Beaudoin, Isabelle Sagnes, Ariel Levenson, and Alejandro~M. Yacomotti.
\newblock Spontaneous mirror-symmetry breaking in coupled photonic-crystal
  nanolasers.
\newblock {\em Nature Photonics}, 9:311--315, May 2015.

\bibitem{Honey-NLS:Photonic-self-localized-PhysRevLett.111.243905}
Yaakov Lumer, Yonatan Plotnik, Mikael~C. Rechtsman, and Mordechai Segev.
\newblock Self-localized states in photonic topological insulators.
\newblock {\em Phys. Rev. Lett.}, 111:243905, Dec 2013.

\bibitem{Math-Honeycomb:Fefferman}
Charles~L. Fefferman and Michael~I. Weinstein.
\newblock Honeycomb lattice potentials and dirac points.
\newblock {\em J. Amer. Math. Soc.}, 25:1169--1220, Jun 2012.

\bibitem{honey-NLS:Zhuyi-Pra-distorted}
Mark~J. Ablowitz and Yi~Zhu.
\newblock Evolution of bloch-mode envelopes in two-dimensional generalized
  honeycomb lattices.
\newblock {\em Phys. Rev. A}, 82:013840, Jul 2010.

\bibitem{Math-Nonlinear-system-Yang}
Jianke Yang.
\newblock {\em Nonlinear Waves in Integrable and Nonintegrable Systems}.
\newblock Society for Industrial and Applied Mathematics, 2010.

\bibitem{NLS:Smerzi-original-PhysRevLett}
A.~Smerzi, S.~Fantoni, S.~Giovanazzi, and S.~R. Shenoy.
\newblock Quantum coherent atomic tunneling between two trapped bose-einstein
  condensates.
\newblock {\em Phys. Rev. Lett.}, 79:4950--4953, Dec 1997.

\bibitem{NLS:Raghavan-original-PhysRevA}
S.~Raghavan, A.~Smerzi, S.~Fantoni, and S.~R. Shenoy.
\newblock Coherent oscillations between two weakly coupled bose-einstein
  condensates: Josephson effects, $\ensuremath{\pi}$ oscillations, and
  macroscopic quantum self-trapping.
\newblock {\em Phys. Rev. A}, 59:620--633, Jan 1999.

\bibitem{NL:Coullet-non-hermitian-PRE}
P.~Coullet and N.~Vandenberghe.
\newblock Chaotic self-trapping of a weakly irreversible double bose
  condensate.
\newblock {\em Phys. Rev. E}, 64:025202, Jul 2001.

\bibitem{NL:Coullet-non-hermitian-Journalphys-B}
P~Coullet and N~Vandenberghe.
\newblock Chaotic dynamics of a bose-einstein condensate in a double-well trap.
\newblock {\em Journal of Physics B: Atomic, Molecular and Optical Physics},
  35(6):1593--1612, mar 2002.

\bibitem{bifurcation-NL:Rahmi-asym2wells}
Rahmi Rusin, Robert Marangell, and Hadi Susanto.
\newblock Symmetry breaking bifurcations in the nls equation with an asymmetric
  delta potential.
\newblock {\em Nonlinear Dynamics}, 100:3815--3824, Jun 2020.

\bibitem{Math-Bifurcation:periodical-nonlinear-Dohnal}
Tom\'{a}\v{s} Dohnal and Hannes Uecker.
\newblock Bifurcation of nonlinear bloch waves from the spectrum in the
  gross-pitaevskii equation.
\newblock {\em Journal of Nonlinear Science}, 26:581--618, 2016.

\bibitem{NLS:1dimarray-PhysRevLett}
Andrea Trombettoni and Augusto Smerzi.
\newblock Discrete solitons and breathers with dilute bose-einstein
  condensates.
\newblock {\em Phys. Rev. Lett.}, 86:2353--2356, Mar 2001.

\bibitem{Math-NLS:DING-periodical-multisolution}
Yanheng Ding and Cheng Lee.
\newblock Multiple solutions of schr\"odinger equations with indefinite linear
  part and super or asymptotically linear terms.
\newblock {\em Journal of Differential Equations}, 222(1):137--163, 2006.

\bibitem{Math-NLS:Periodical-homoclinic}
Vittorio Coti~Zelati Sissa and Paul~H. Rabinowitz.
\newblock Homoclinic type solutions for a semilinear elliptic pde on rn.
\newblock {\em Communications on Pure and Applied Mathematics},
  45(10):1217--1269, 1992.

\bibitem{Math-NLS:Edge-States-Weinstein-Zhuyi}
J.~P. Lee-Thorp, M.~I. Weinstein, and Y.~Zhu.
\newblock Elliptic operators with honeycomb symmetry: Dirac points, edge states
  and applications to photonic graphene.
\newblock {\em Archive for Rational Mechanics and Analysis}, 232:1--63, Apr
  2019.

\bibitem{honey-NLS:Zhuyi2}
Mark~J. Ablowitz and Yi~Zhu.
\newblock Nonlinear wave packets in deformed honeycomb lattices.
\newblock {\em SIAM Journal on Applied Mathematics}, 73(6):1959--1979, 2013.

\bibitem{Honey-NLS:Zhuyi3}
Mark~J. Ablowitz and Yi~Zhu.
\newblock Nonlinear waves in shallow honeycomb lattices.
\newblock {\em SIAM Journal on Applied Mathematics}, 72(1):240--260, 2012.

\bibitem{Math-NLS:William-localized-equations}
William Borrelli.
\newblock Weakly localized states for nonlinear dirac equations.
\newblock {\em Calculus of Variations and Partial Differential Equations},
  57:155, 2018.

\bibitem{Math-NLS:nonlinear-diracEQ-Jack}
Jack Arbunich and Christof Sparber.
\newblock Rigorous derivation of nonlinear dirac equations for wave propagation
  in honeycomb structures.
\newblock {\em Journal of Mathematical Physics}, 59(1):011509, 2018.

\bibitem{Honey-NLS:LinTai-Chia-Saturable-energy}
Tai-Chia Lin, Milivoj~R. Beli\'{c}, Milan~S. Petrovi\'{c}, and Goong Chen.
\newblock Ground states of nonlinear schr\"odinger systems with saturable
  nonlinearity in r2 for two counterpropagating beams.
\newblock {\em Journal of Mathematical Physics}, 55(1):011505, 2014.

\end{thebibliography}
\end{document}